\documentclass[12pt]{article}

\usepackage[utf8]{inputenc}

\usepackage[english]{babel}

\usepackage{amsmath,amsfonts,amssymb,amsthm}

\usepackage[left=1.5cm,right=1.5cm,top=1.5cm,bottom=1.5cm]{geometry}

\usepackage{tikz}
\usetikzlibrary{shapes.geometric}
\usepackage{wrapfig}

\usepackage[pdfencoding=unicode, psdextra, citecolor=blue, urlcolor=blue,
linkcolor=blue, colorlinks=true, bookmarksopen=true]{hyperref}

\usepackage{comment}

\usepackage{bbm}

\newcommand{\R}{\mathbb{R}}
\newcommand{\id}{\mathbbm{1}}

\DeclareMathOperator{\conv}{conv}
\DeclareMathOperator{\aff}{aff}

\newtheorem{theorem}{Theorem}

\newtheorem{lemma}{Lemma}
\newtheorem{corollary}{Corollary}

\theoremstyle{definition}
\newtheorem{definition}{Definition}

\theoremstyle{remark}
\newtheorem{example}{Example}
\newtheorem{remark}{Remark}
\newtheorem{question}{Question}

\title{Finsler and sub-Finsler geodesics with chattering.}
\author{L.V.~Lokutsievskiy, M.I.~Zelikin}

\sloppy

\begin{document}

\maketitle

\begin{abstract}

	In this paper, we provide examples of Finsler and sub-Finsler manifolds whose geodesics exhibit chattering, that is, a countable number of switches over an arbitrarily small time interval. We also present an explicit left-invariant structure on a Carnot group whose geodesics exhibit chattering. This provides a negative answer to Le Donne's question. Furthermore, the paper presents a sufficient condition for normal Pontryagin maximum principle extremals in (sub-)Finsler problems to exhibit chattering.
	
\end{abstract}

\section{Introduction}
In optimal control theory, chattering (or the Fuller phenomenon, named after its discoverer, \cite{Fuller}) is frequently observed. It is characterized by a countable number of optimal control switches within arbitrarily small time intervals. In the context of optimal control problems with drift, this phenomenon has been thoroughly studied as the junction of second-order singular extremals with non-singular ones \cite{ZB}. However, this phenomenon has not been observed in sub-Riemannian problems, even though sub-Riemannian geometry possesses a wealth of singular extremals. We believe this stems not from the absence of drift in sub-Riemannian geometry, but from the strict convexity of the norm. In the Riemannian and sub-Riemannian cases, the unit ball in the tangent space is an ellipsoid, and therefore its boundary is strictly convex. On Finsler and sub-Finsler manifolds, the norms on tangent spaces are not necessarily Euclidean, and the unit balls can be, for example, convex polyhedra.

This paper aims to demonstrate, in the simplest possible manner, explicit examples of Finsler and sub-Finsler manifolds in which shortest paths (and, consequently, geodesics) exhibit chattering. In these examples, the absence of strict convexity of the norm will play a key role. The crucial point is that the chattering phenomenon arises at the junction of second-order singular extremals with non-singular ones. Consequently, if the norm is strictly convex, normal Pontryagin maximum principle extremals can never be singular, hence such a junction is impossible. Therefore, it is natural to consider the class of Finsler and sub-Finsler problems that contain singular extremals that are not abnormal. In other words, the norm is not strictly convex.

\medskip

In this paper, we present:

\begin{itemize}
	\item an example of a Finsler manifold (see \S~\ref{sec: finsl example});
	\item an example of a sub-Finsler manifold (see \S~\ref{sec: subf example});
	\item an example of a Carnot group (see \S\ref{sec: Carnot}) and two left-invariant structures on it: a Finsler structure (see \S\ref{subsec: Carnot Finsler}) and a sub-Finsler structure (see \S\ref{subsec: Carnot sub Finsler}),
\end{itemize}
such that in each of these examples, there exists a pair of points connected by a unique shortest path, and this shortest path contains chattering that is identical to the chattering in the classical Fuller problem. So this provides a negative answer to Le Donne's question (see~\cite[Question 6.1.15]{LeDonne}). Additionally, in these examples, we provide a sufficient condition for a pair of points to be connected by a unique shortest path, where the shortest path contains chattering.

In \S\ref{sec: general theorem}, we present a theorem that establishes a sufficient condition for the existence of chattering extremals in Finsler and sub-Finsler optimal control problems.

In \S\ref{sec: chaos}, we address the emergence of another phenomenon in Finsler and sub-Finsler geometry related to optimal control problems with drift, where the sequence of control switches in chattering between vertices (e.g., of a triangle) is determined by a topological Markov chain.

Basic definitions related to Finsler and sub-Finsler polyhedral structures are provided in \S\ref{sec: subfinsler def} and \S\ref{sec: subf distance}.

A brief overview of the classical Fuller problem and the chattering phenomenon within it is given in~\S\ref{sec: chattering}.

\section{Two definitions of a polyhedral sub-Finsler structure}
\label{sec: subfinsler def}

Let $M$, $\dim M=n$, be a smooth manifold. A Finsler structure on $M$ is a family of norms on $T_xM$ (a good introduction to Finsler geometry is given in~\cite{Bao}). Using this family of norms, one can measure the lengths of curves and thus turn any Finsler manifold into a metric space. The structure is called polyhedral if the unit balls of the indicated norms are polyhedra.

The well-known Weyl–Minkowski states that any compact convex polyhedron can be defined in two equivalent ways: as the convex hull of its vertices, and as the compact intersection of a finite set of half-spaces. In the case of a polyhedral Finsler structure, we obtain not a single polyhedron but a whole family, and, hence, generally two different definitions of a polyhedral sub-Finsler structure. Let us give both of these definitions.

\begin{definition}[Direct]
	\label{defn: polyhedral1}
	
	Let $f_1,\ldots,f_N$ be (smooth) vector fields on $M$ such that for every point $x\in M$ it holds\footnote{Here $\mathrm{rint}$ denotes the relative interior of a convex set.} $0\in\mathrm{rint}\,B_1(x)$, where $B_1(x)=\conv(f_1(x),\ldots,f_{N}(x))$. Then the polyhedral sub-Finsler structure is defined by the subnorm via the Minkowski functional\footnote{As usual, we take $\inf\emptyset = +\infty$.}:
	$$
	\forall \xi\in T_x M\qquad \|\xi\|_1=\mu_{B_1(x)}(\xi) = \inf\{a\ge 0: \xi\in aB_1(x)\}
	\in \R\cup\{+\infty\}.
	$$
\end{definition}

Note that, generally speaking, $\|\xi\|_1\ne \|-\xi\|_1$. Moreover, for some $\xi\in T_xM$ it may happen that $\|\xi\|_1=+\infty$, if the linear span of the vectors $f_i(x)$ does not coincide with the entire tangent space $T_xM$. Therefore, it is convenient to introduce the notation
$$
\Delta(x) = \mathrm{span}\,\{f_1(x),\ldots,f_N(x)\} \subset T_xM.
$$
In particular, if $\xi\in T_xM\setminus \Delta(x)$, then $\|\xi\|_1=+\infty$. If $\Delta(x) = T_xM$ for all $x$, the structure is usually called \textit{Finsler}. Otherwise, the structure is called \textit{sub-Finsler}. Therefore, the following dual description (by the Weyl–Minkowski theorem) contains two types of 1-forms: one set defines the subspace $\Delta(x)$, and the second defines the Finsler norm within it.

\begin{definition}[Dual]
	\label{defn: polyhedral2}
	
	Let $\lambda_1,\ldots,\lambda_J$ and $\zeta_1,\ldots,\zeta_I$ be (smooth) 1-forms on $M$. Denote
	$$
	\Delta(x) = \bigcap_{i=1}^I\ker \zeta_i.
	$$
	Then if for every $x\in M$ the set
	$$
	B_2(x) = \{\xi\in \Delta(x):\ \forall j\ \lambda_j[\xi]\le 1\}\subset T_xM
	$$
	is compact, then the polyhedral sub-Finsler structure is defined by the subnorm
	$$
	\forall \xi\in T_x M\qquad \|\xi\|_2=\mu_{B_2(x)}(\xi)\in \R\cup\{+\infty\}.
	$$
	
\end{definition}

The second formula can be conveniently rewritten as:
\begin{equation}
	\label{eq: norm 2 explicit}
	\|\xi\|_2 = \begin{cases}
		\max_j \lambda_j[\xi],&\text{if }\xi\in\Delta(x);\\
		+\infty,&\text{if }\xi\not\in\Delta(x).
	\end{cases}
\end{equation}

\bigskip

Generally speaking, definitions~\ref{defn: polyhedral1} and~\ref{defn: polyhedral2} are not equivalent, for the following reason. As $x$ changes, both balls $B_1(x)$ and $B_2(x)$ can behave non-smoothly, however the ball $B_1(x)$ behaves similarly to the maximum of smooth functions, while $B_2(x)$ behaves similarly to the minimum.

\begin{example}
	
	Let $M=\R$, $f_1(x)=-1$, $f_2(x)=1$ and $f_3(x)=\frac12(1+x^2)$. Then the ball $B_1(x)$ has the form
	$$
	B_1(x) = \big[-1;\,\max\{1,\tfrac12(1+x^2)\}\big]
	$$
	and in a neighborhood of points $x=\pm1$ it cannot be defined using smooth 1-forms $\lambda_j$ in definition~\ref{defn: polyhedral2}.
	
	Conversely, if $\lambda_1(x)=-1$, $\lambda_2(x)=1$ and $\lambda_3(x)=\frac2{1+x^2}$, then the ball $B_2(x)$ has the form
	$$
	B_2(x) = \big[-1;\,\min\{1,\tfrac12(1+x^2)\}\big]
	$$
	and in a neighborhood of points $x=\pm1$ it cannot be defined using smooth vector fields $f_j$ in definition~\ref{defn: polyhedral1}.
	
\end{example}
\section{Distance on a sub-Finsler manifold}
\label{sec: subf distance}

Let $\|\cdot\|$ denote either $\|\cdot\|_1$ or $\|\cdot\|_2$ from now on. The distance between points $x_0$ and $x_1$ on $M$ is defined as the infimum of the lengths of (absolutely continuous, or equivalently, Lipschitz) curves connecting them:
$$
d_{SF}(x_0,x_1) = \inf\left\{
\mathrm{length}(x(\cdot))=\int_0^1 \|\dot x(t)\|\,dt\ \Big|\  x(0)=x_0,x(1)=x_1
\right\}.
$$
Equivalently, the distance between points can be defined as the minimum time of travel along naturally parameterized curves:
$$
d_{SF}(x_0,x_1) = \inf\left\{
T\ \big|\ \|\dot x(t)\|\le 1,x(0)=x_0,x(T)=x_1
\right\}.
$$

Let us now formulate two optimal control problems that are equivalent to finding shortest path. A good introduction to the sub-Finsler geometry from optimal control point of view is given in~\cite{BBLDS}. An interesting method for explicit finding sub-Finsler geodesics for the case $\dim\Delta=2$ is suggested in \cite{LokutsievskiyCT, LokutsievskiyCT2}.

The first definition is convenient in the case $\|\cdot\|=\|\cdot\|_2$, since the subnorm $\|\cdot\|_2$ can be computed explicitly by formula~\eqref{eq: norm 2 explicit}. Indeed, if we denote $\Delta(x)=\bigcap_i\ker \zeta_i(x)$, then we obtain an optimal control problem
$$
\int_0^1 \max_j\lambda_j(\dot x)\,dt\to\min
$$
$$
\dot x\in \Delta(x);
$$
$$
x(0)=x_0;\qquad x(1)=x_1.
$$
A slight inconvenience here is that the linear subspace $\Delta(x)$ is defined as the intersection of the kernels of 1-forms. However, this obstacle is easily overcome: if $g_1,\ldots,g_K$ are vector fields on $M$ such that $\Delta(x)=\mathrm{span}\, g_k(x)$ (such vector fields always exist), then $\dot x=\sum v_kg_k(x)$, where the controls $v_k$ are unbounded, $v_k\in\R$.

The second definition is conveniently reformulated as an optimal control problem (specifically: time-optimal) in the case $\|\cdot\|=\|\cdot\|_1$:
\begin{equation}
	\label{eq: time minimiztion problem}
	\left\{\begin{array}{c}
		T\to\min\\
		\dot x = \sum_{i}u_if_i(x);\\
		u_i\ge 0,\ \sum_i u_i=1;\\
		x(0)=x_0,\ x(1)=x_1.
	\end{array}\right.
\end{equation}
For this problem, the Pontryagin Maximum Principle takes a particularly convenient form:
\begin{equation}
	\label{eq: H for time minimiztion}
	H = \sum_{j}u_j\langle p, f_j(x) \rangle \to \max_{u_i\ge 0;\ \sum_iu_i=1}.
\end{equation}
Since the Pontryagin function $H$ is linear in the controls, the maximum is always attained on some face.

\section{Chattering in the Fuller problem.}
\label{sec: chattering}

The aim of this work is to demonstrate the presence of chattering in polyhedral Finsler and sub-Finsler structures. Therefore, in this section, we briefly present the classical Fuller problem, in which all optimal curves (except the identically zero one) exhibit chattering. This problem is not sub-Finsler (as it contains drift), but it will be useful for our further discussion.

\begin{example}[Fuller\footnote{The factor $\frac12$ here is not essential and is added for historical reasons.}, \cite{Fuller}]
	\begin{equation}
		\label{problem Fuller}
		\begin{gathered}
			\frac12\int_0^{+\infty} x^2\,dt \to\min\\
			\dot x = y;\quad \dot y = u;\quad |u|\le 1\\
			x(0) = x_0;\quad y(0)=y_0.
		\end{gathered}
	\end{equation}
	Here $(x(t),y(t))$ is a curve in the plane $\R^2$, $u(t)\in[-1;1]$ is the control, and $(x_0,y_0)$ is the initial point.
\end{example}

Pontryagin Maximum Principle (PMP) for this problem gives the following Pontryagin's function, $\lambda_0\in\{0,1\}$:
\begin{equation}
\label{eq: PMP for Fuller Problem}
	H = -\frac{\lambda_0}2 x^2 + py + qu.
\end{equation}
It is easy to see that $\lambda_0\ne 0$ on any optimal trajectory. Indeed, if $\lambda_0=0$, then $q(t)$ is a quadratic polynomial. Consequently $x(t)$ is a quadratic polynomial for large enough $t$ and hence $\int_0^\infty x^2\,dt=\infty$. So $\lambda_0=1$.

It is well known that for any initial point $(x_0,y_0)$, there exists a unique optimal trajectory $(x(t),y(t),u(t))$. This trajectory reaches the origin at some time $0\le T<\infty$, i.e., $x(t)=y(t)=u(t)=0$ for all $t\ge T$. Moreover, if $(x_0,y_0)\ne (0,0)$, then as $t\to T-0$, the optimal control $u(t)$ performs a countable number of switches between $1$ and $-1$. This phenomenon is called chattering (or the Fuller phenomenon, in honor of its discoverer). Let us formulate this result as a theorem

\begin{theorem}[{see~\cite[Chapter 2]{ZB}}]
	\label{thm: fuller problem}
	Let $\mu$ denote the unique root of the equation $\mu^4-3\mu^3-4\mu^2-3\mu+1=0$ on the interval\footnote{$\mu=\frac14(3+\sqrt{33}-\sqrt{26+6\sqrt{33}})$} $\mu\in(0;1)$. Then for any initial point $(x_0,y_0)$, there exists a unique optimal trajectory $(x(t),y(t),u(t))$ in the Fuller problem~\eqref{problem Fuller}. Moreover, for this trajectory:
	\begin{enumerate}
		
		\item there exists\footnote{The index $F$ is used to indicate the Fuller problem.} $0\le T_F(x_0,y_0)<\infty$ such that $x(t)\equiv y(t)\equiv u(t)\equiv 0$ for $t\ge T_F(x_0,y_0)$, but $|u(t)|=1$ for almost all $t\in[0;T_F(x_0;y_0)]$;
		
		\item if $(x_0;y_0)\ne 0$, then there exists a number $\tau\in(0;T_F(x_0;y_0))$ such that the control $u(t)$ changes sign at points of the form $t_k=T_F(x_0;y_0)-\tau\mu^k$, $k=0,1,2,\ldots$, and is constant in between
		
	\end{enumerate}
	
\end{theorem}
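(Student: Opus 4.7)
The plan is to extract the structure of optimal trajectories from the Pontryagin Maximum Principle and then exploit the scale invariance of the problem to pin down the geometric progression of switching times.

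First, I compute the adjoint equations. From the given $H=-\tfrac12 x^2+py+qu$ (with $\lambda_0=1$ as already established), the Hamiltonian system reads $\dot x=y$, $\dot y=u$, $\dot p=x$, $\dot q=-p$, so that $q^{(4)}(t)=-u(t)$ almost everywhere. Maximizing $H$ over $u\in[-1;1]$ gives $u(t)=\mathrm{sgn}\,q(t)$ on the open set $\{q\ne 0\}$, and thus on non-singular arcs one has the scalar equation $q^{(4)}=-\mathrm{sgn}\,q$. An elementary argument rules out singular arcs outside the origin: if $q\equiv 0$ on an interval, then $p\equiv 0$ and successive differentiations force $x\equiv y\equiv 0$ there, so the trajectory has already reached the target.

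Second, I use the one-parameter scaling symmetry. The transformation $t\to\alpha t$, $x\to\alpha^2 x$, $y\to\alpha y$, $u\to u$, $p\to\alpha^3 p$, $q\to\alpha^4 q$ preserves the Hamiltonian system and multiplies the cost by $\alpha^5$. Hence if $(x(\cdot),y(\cdot),u(\cdot))$ is optimal from $(x_0,y_0)$ then its rescaling is optimal from $(\alpha^2 x_0,\alpha y_0)$, and the finite time $T_F$ satisfies $T_F(\alpha^2 x_0,\alpha y_0)=\alpha\,T_F(x_0,y_0)$. This homogeneity forces the switching locus, projected onto the $(x,y)$-plane, to be invariant under the same scaling, hence to consist of two parabolic arcs of the form $x=\pm c\,y^2$ for a constant $c>0$ to be determined.

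Third, I perform a return-map computation. Starting from a switching point $(x_0,y_0)$ on one branch, I integrate $\dot x=y$, $\dot y=\pm 1$ explicitly until the next switch and require the new state to lie on the opposite branch; iterating one more step and demanding that the trajectory return to the original branch at the scaled point $(\mu^2 x_0,\mu y_0)$ yields a closed system of polynomial equations in $\mu$ and $c$. Eliminating $c$ produces exactly $\mu^4-3\mu^3-4\mu^2-3\mu+1=0$, whose unique root in $(0;1)$ is the asserted $\mu$. Consequently the switching times satisfy $t_{k+1}-T=\mu(t_k-T)$, i.e.\ $t_k=T-\tau\mu^k$ for some $\tau>0$ depending on the initial point.

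The main obstacle is proving existence of $T_F$ together with global optimality and uniqueness. The natural route is to construct a Bellman function $V(x,y)$ by setting $V$ equal to the cost along the synthesized self-similar trajectory; the scaling immediately gives $V(\alpha^2 x,\alpha y)=\alpha^5 V(x,y)$, which reduces the determination of $V$ to a one-dimensional calculation on a cross-section to the scaling flow. Once $V$ is shown to be $C^1$ and to satisfy the Hamilton--Jacobi--Bellman equation associated with~\eqref{problem Fuller}, a standard verification argument delivers both optimality of the chattering trajectory and uniqueness of the optimal control.
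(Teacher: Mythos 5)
The paper does not prove Theorem~\ref{thm: fuller problem} at all: it cites \cite[Chapter~2]{ZB} and only sketches the key idea, namely that the Fuller dilation group preserves the optimal synthesis and that every nontrivial optimal trajectory is self-similar, from which the geometric progression of switching times follows. Your outline follows exactly this route --- PMP and sign of $q$, ruling out singular arcs away from the origin, the dilation symmetry forcing the switching locus to lie on $x=\pm c\,y^2$, the return map giving the palindromic quartic $\mu^4-3\mu^3-4\mu^2-3\mu+1=0$ (indeed the unique real root in $(0;1)$, the other pair being complex), and an HJB/Bellman verification for global optimality and uniqueness --- so it matches the paper's sketch and the underlying ZB argument. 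Like the paper, you leave the genuinely hard parts at the level of a sketch: actually carrying out the return-map elimination, proving that the switching locus really is a single scale-invariant curve rather than merely some scale-invariant set, and establishing $C^1$-regularity of the candidate value function so that the verification theorem applies; these are precisely the pieces delegated to \cite{ZB}.
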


Thus, if $(x_0,y_0)\ne (0,0)$, the control performs a countable number of switches on any arbitrarily small time interval $t\in(T_F(x_0;y_0)-\varepsilon;T_F(x_0,y_0))$, $\varepsilon>0$ (see Fig.~\ref{fig: fuller optimal synthesis}).

\begin{figure}
	\centering
	\includegraphics[width=0.4\linewidth]{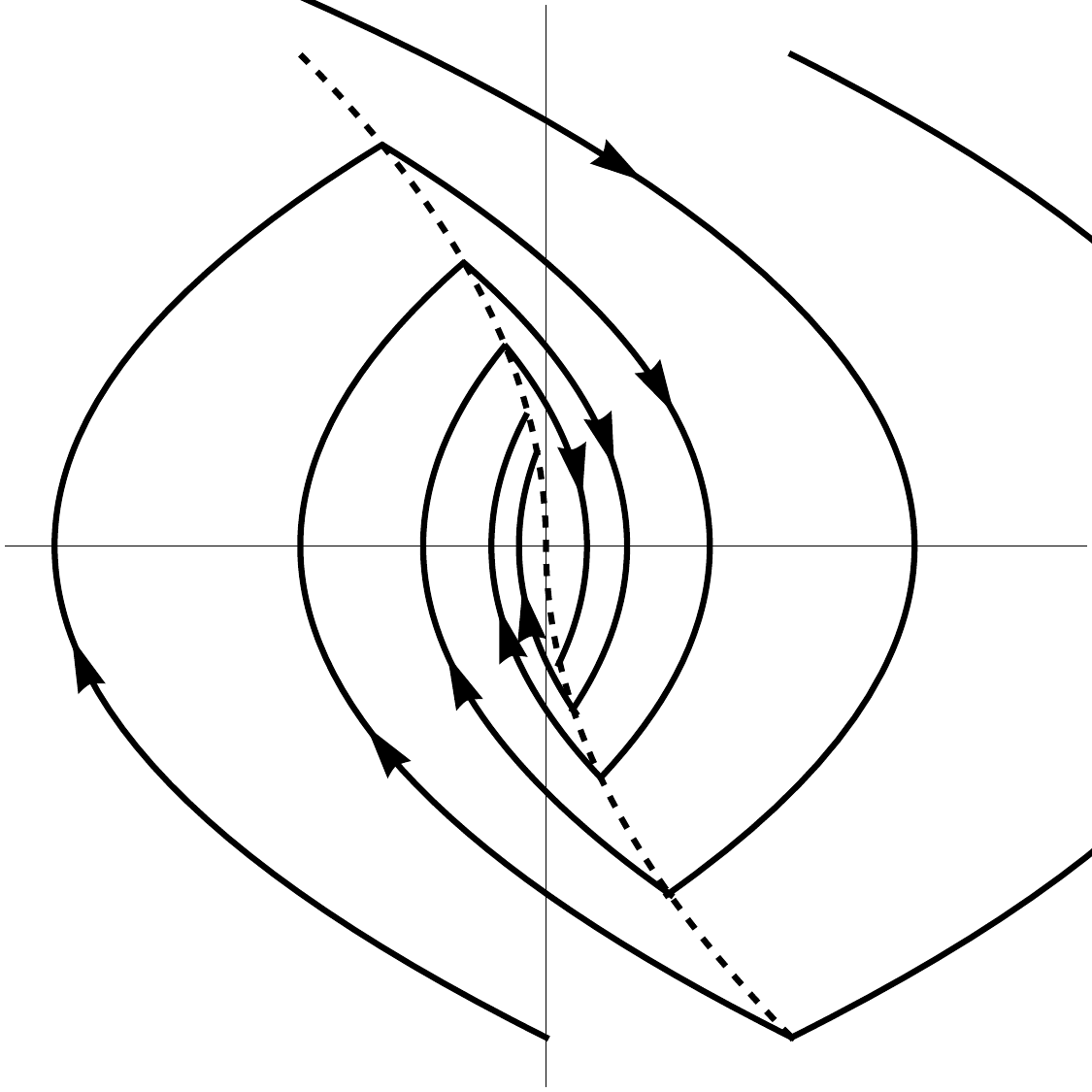}
	\caption{Optimal synthesis in the Fuller problem.}
	\label{fig: fuller optimal synthesis}
\end{figure}

We do not provide the proof, but we mention that a key fact is the existence of the Fuller symmetry group: for any $\lambda>0$ the following map
$$
x\mapsto \lambda^2x;\quad y\mapsto \lambda y;\quad t\mapsto \lambda t
$$
preserves the control system $\dot x=y$, $|\dot y|\le 1$ and changes the functional by a factor of $\lambda^5$. Therefore, such a map transforms the optimal trajectory from the point $(x_0,y_0)$ into the optimal trajectory from the point $(\lambda^2x_0,\lambda y_0)$. Thus, the Fuller symmetry group preserves the optimal synthesis on the plane $\R^2=\{(x,y)\}$. The proof of Theorem~\ref{thm: fuller problem} can be carried out using this symmetry. It is sufficient to show that every non-trivial optimal trajectory $(x(t),y(t))$ is self-similar, i.e., there exists a time moment $s>0$ such that $x(s+\mu t)=\mu^2 x(t)$ and $y(s+\mu t) = \mu y(t)$ for some $\mu\in(0;1)$ (in fact, the $\mu$ from Theorem~\ref{thm: fuller problem}). From this, it follows that $T_F(x_0,y_0)=s+\mu s+\mu^2 s+\ldots=s/(1-\mu)$, there is the same number of switches on each such interval, and consequently, there must be a countable number of them along the entire trajectory. For the details of the proof, we refer to the book~\cite{ZB}.

In the Fuller problem~\eqref{problem Fuller}, the exact formula is known both for the time to reach the origin $T_F(x_0,y_0)$ (see~\cite[Proposition~3.5]{ZMH}), and for the optimal value of the functional $J_F(x_0,y_0)$ (see~\cite[Proposition~3.2]{ZMH}), where
$$
J_F(x_0,y_0)  =  \inf \left\{
\frac12\int_0^\infty x^2\,dt
\ \Big|\
x(0)=x_0,\dot x(0)=y_0\text{ and }|\ddot x|\le 1
\right\}.
$$
We will not need these formulas. However, the asymptotics of these functions are very useful and quite easy to find. Indeed, it can be shown that the functions $T_F$ and $J_F$ are continuous and respect the symmetry group:
$$
\forall\lambda>0\quad T_F(\lambda^2 x_0,\lambda y_0) = \lambda T_F(x_0,y_0)\text{ and }J_F(\lambda^2 x_0,\lambda y_0) = \lambda^5 J_F(x_0,y_0).
$$
Therefore $T_F(x_0,y_0)\sim (x_0^2+y_0^4)^{\frac14}$ and $J_F(x_0,y_0)\sim (x_0^2+y_0^4)^{\frac54}$, where the estimation constants are exactly the maximum and minimum values of the functions $T_F(x_0,y_0)$ and $J_F(x_0,y_0)$ on the oval $x_0^2+y_0^4=1$.

It will be important for us that
$$
(x_0,y_0)\ne (0,0)
\qquad\Longrightarrow\qquad
0<T_F(x_0,y_0)<\infty
\text{ and }
0<J_F(x_0,y_0)<\infty.
$$

In Finsler and sub-Finsler problems, the control occurs over a finite time interval. Therefore, the structure of optimal trajectories in the Fuller problem over a finite time interval will be important to us.

Let $(x_0,y_0)\in\R^2$, $(x_1,y_1)\in\R^2$ and $t_1>0$. Consider the problem
\begin{equation}
	\label{problem: Fuller finite time}
	\begin{gathered}
		\frac12\int_0^{t_1} x^2\,dt \to\min\\
		\dot x = y;\quad \dot y = u;\quad |u|\le 1\\
		x(0) = x_0;\quad y(0)=y_0;\quad x(t_1) = x_1;\quad y(t_1) = y_1.
	\end{gathered}
\end{equation}
\begin{lemma}
	\label{lm: Fuller finite time}
	If $t_1\ge T_F(x_0,y_0) + T_F(x_1,-y_1)$, then in the problem~\eqref{problem: Fuller finite time} the optimal trajectory $x_F(t),y_F(t),u_F(t)$ exists, is unique, and is constructed as follows. Let $x^{0}(t)$, $y^{0}(t)$, $u^{0}(t)$ and $x^{1}(t)$, $y^{1}(t)$, $u^{1}(t)$ denote the optimal trajectories in the classical Fuller problem~\eqref{problem Fuller} starting from points $(x_0,y_0)$ and $(x_1,-y_1)$, respectively. Then
	\begin{itemize}
		\item $x_F(t)=x^0(t)$, $y_F(t)=y^0(t)$ and $u_F(t)=u^0(t)$ for $t\in [0;T_F(x_0,y_0)]$;
		\item $x_F(t)=y_F(t)=u_F(t)=0$ for $t\in [T_F(x_0,y_0);t_1-T_F(x_1,-y_1)]$;
		\item $x_F(t)=x^1(t_1-t)$, $y_F(t)=-y^1(t_1-t)$ and $u_F(t)=u^1(t_1-t)$ for $t\in [t_1-T_F(x_1,-y_1);t_1]$.
	\end{itemize}
	In particular, if $(x_0,y_0)\ne (0,0)$ and $(x_1,y_1)\ne (0,0)$, then the optimal trajectory exhibits chattering both as $t\to T_F(x_0,y_0)-0$ and as $t\to t_1-T_F(x_1,-y_1)+0$, and is equal to 0 between these time moments.
\end{lemma}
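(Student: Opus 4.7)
\emph{Plan.} The statement decomposes into three parts: an upper bound by explicit construction (easy), a matching lower bound via the dynamic programming principle together with time reversal (which delivers the bound immediately whenever the trajectory visits the origin), and an argument via the Pontryagin Maximum Principle showing that any optimiser indeed must visit the origin.

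\emph{Upper bound.} A direct verification will show that the three-piece curve in the statement is absolutely continuous, satisfies $\dot x=y$, $\dot y=u$ with $|u|\le 1$, and realises the prescribed boundary data; matching at the two junctions uses the fact that the Fuller-optimal trajectory reaches the origin in finite time (Theorem~\ref{thm: fuller problem}). Since $x\equiv 0$ on the middle interval, that piece contributes nothing to the integral; by definition of $J_F$ the first piece contributes exactly $J_F(x_0,y_0)$; and the substitution $\tau=t_1-t$, together with the symmetry $(x,y,u)(t)\mapsto (x(t_1-t),-y(t_1-t),u(t_1-t))$ of the Fuller dynamics and of the integrand $x^2$, gives $J_F(x_1,-y_1)$ for the third piece. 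Hence the infimum in~\eqref{problem: Fuller finite time} is at most $J_F(x_0,y_0)+J_F(x_1,-y_1)$.

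\emph{Conditional lower bound.} For an arbitrary admissible trajectory $(x,y,u)$ on $[0,t_1]$ with cost $I=\tfrac12\int_0^{t_1} x^2\,dt$, I would introduce
\[
\Phi(t):=J_F(x(t),y(t))+\tfrac12\int_0^t x^2\,ds,\qquad
\Psi(t):=J_F(x(t),-y(t))+\tfrac12\int_t^{t_1}x^2\,ds.
\]
The dynamic programming principle for the infinite-horizon Fuller problem yields that $\Phi$ is non-decreasing; applied to the time-reversed trajectory $(x(t_1-\cdot),-y(t_1-\cdot),u(t_1-\cdot))$, which starts at $(x_1,-y_1)$, it yields that $\Psi$ is non-increasing. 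Summing $\Phi(0)\le\Phi(s)$ and $\Psi(t_1)\le\Psi(s)$ gives, for every $s\in[0,t_1]$,
\[
J_F(x_0,y_0)+J_F(x_1,-y_1)\;\le\; I+J_F(x(s),y(s))+J_F(x(s),-y(s)).
\]
At any time $s$ where the trajectory is at the origin, the remainder on the right vanishes, and the desired bound $I\ge J_F(x_0,y_0)+J_F(x_1,-y_1)$ follows.

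\emph{Optimisers reach the origin; uniqueness; main obstacle.} Existence of an optimiser is standard from weak-$*$ compactness of $\{u\in L^\infty:|u|\le 1\}$ and the convex lower semi-continuity of $I$. To close the argument, I would apply the PMP on $[0,t_1]$: the same polynomial argument as in~\eqref{eq: PMP for Fuller Problem} forces $\lambda_0=1$, the Pontryagin function is $H=-\tfrac12 x^2+py+qu$, the control is bang-bang $u=\operatorname{sgn}(q)$ off the zero set of $q$, and on any interval where $q\equiv 0$ the adjoint equations $\dot q=-p$, $\dot p=x$ force $p\equiv 0$ and the state at the origin. Combined with the self-similar chattering structure of Fuller extremals (Theorem~\ref{thm: fuller problem}) and the inequality $t_1\ge T_F(x_0,y_0)+T_F(x_1,-y_1)$, this pins any PMP extremal with the prescribed boundary data down to the three-piece curve of the statement, which automatically visits the origin and saturates the bound of the previous paragraph; uniqueness of the minimiser follows. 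The main obstacle I anticipate is precisely this classification step at the junctions with the rest-at-origin arc: the chattering packs infinitely many switches into each junction, so matching them on both sides of the rest piece will rely essentially on the scaling symmetry $(x,y,t)\mapsto(\lambda^2 x,\lambda y,\lambda t)$ together with the self-similarity in Theorem~\ref{thm: fuller problem}.
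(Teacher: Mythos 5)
Your upper-bound computation (explicit three-piece admissible curve realising cost $J_F(x_0,y_0)+J_F(x_1,-y_1)$) is correct and matches the first part of the paper's proof. Beyond that your strategy diverges: the paper does \emph{not} derive a matching lower bound for all trajectories. Instead it verifies that the three-piece candidate satisfies the PMP with $\lambda_0=1$, with adjoint variables obtained by concatenating the adjoints of the two Fuller arcs and zero in the middle, and then invokes the fact that for this convex problem the PMP with $\lambda_0=1$ is a \emph{sufficient} optimality condition: an explicit two-step integration by parts, using $a^2-b^2\ge 2b(a-b)$ and the boundary conditions, gives $\frac12\int_0^{t_1}(x^2-x_F^2)\,dt\ge \int_0^{t_1}q_F(u_F-u)\,dt\ge 0$ directly. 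Uniqueness then falls out immediately from the strict convexity of $x\mapsto x^2$. That argument never needs to say anything about \emph{other} extremals.

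Your DPP observation (monotonicity of $\Phi$ and of $\Psi$ under time reversal) is correct and elegant, but it only delivers the lower bound $I\ge J_F(x_0,y_0)+J_F(x_1,-y_1)$ under the additional hypothesis that the competing trajectory visits the origin, and this is where the gap lies. The PMP sketch you give only handles the case $q\equiv 0$ on an interval; it does not exclude bang-bang PMP extremals with isolated zeros of $q$ that connect the prescribed boundary data without ever reaching the origin, and such extremals abound in the Fuller Hamiltonian flow. Ruling them out would require the full phase-portrait classification of Fuller extremals (as in~\cite{ZB}) adapted to a finite horizon, together with the matching-across-the-junction argument you flag as the "main obstacle." In short, the classification step you defer is not a technicality: it is essentially the whole remaining content of the lemma, and the paper's sufficiency-by-convexity argument is designed precisely to avoid it. If you wish to keep the DPP route, you would need to either (i) supply the extremal classification, or (ii) replace the origin-visiting assumption by a direct lower bound argument; alternatively, adopt the paper's verification-plus-convexity approach, which is shorter and also yields uniqueness for free.
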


\begin{proof}
	
	First, note that the specified trajectory is admissible. Indeed, $x_F(t)$ and $y_F(t)$ are Lipschitz, satisfy the boundary conditions, and the system of equations $\dot x_F=y_F$, $\dot y_F=u_F$ with $u_F\in[-1;1]$. PMP for this problem has the same Pontryagin function~\eqref{eq: PMP for Fuller Problem}. The trajectory $(x_F(t),y_F(t),u_F(t))$ satisfies the Pontryagin Maximum Principle with $\lambda_0=1$ and the corresponding costate variables $(p_F(t), q_F(t))$, which are obtained from the costate variables of the trajectories $(x^{0}(t),y^{0}(t),u^{0}(t))$ and $(x^{1}(t),y^{1}(t),u^{1}(t))$ similarly to the construction of the trajectory itself. Thus, the specified trajectory satisfies the PMP with $\lambda_0=1$ and is therefore optimal, since the problem is convex, and for convex problems, the PMP with $\lambda_0=1$ is a sufficient condition for optimality. Indeed, if $(x(t),y(t),u(t))$ is any other trajectory, then due to convexity we have:
	$$
	\frac12\int_0^{t_1} (x^2 - x_F^2)\,dt \ge
	\int_0^{t_1} x_F(x-x_F) \,dt =
	(p_F(x-x_F))\big|_0^{t_1}-\int_0^{t_1} p_F(y-y_F) \,dt
	$$
	The first term on the right-hand side is $0$, since $x(0)=x_F(0)=x_0$ and $x(t_1)=x_F(t_1)=x_1$. Therefore
	$$
	\frac12\int_0^{t_1} (x^2 - x_F^2)\,dt \ge
	-\int_0^{t_1} p_F(y-y_F) \,dt=
	-(q_F(y-y_F))\big|_0^{t_1} + \int_0^{t_1} q_F(u-u_F) \,dt \ge 0.
	$$
	Indeed, the first term is 0, since $y(0)=y_F(0)=y_0$ and $y(t_1)=y_F(t_1)=y_1$, and the second term is non-negative, since $q_F(t)u_F(t)\ge q_F(t)v$ for any $v\in[-1;1]$ and almost all $t$ according to the PMP. Uniqueness also easily follows from the strict convexity of $x^2$.
\end{proof}

\section{An explicit example of sub-Finsler geodesics with chattering}
\label{sec: subf example}
Let $M=\R^4=\{q=(x,y,z,w)\}$. Let us define a sub-Finsler structure on $M$ using two key vector fields:
\begin{equation}
	\label{eq: f1 f2 main field}
	f_1(q) = \begin{pmatrix}
		y\\
		0\\
		\frac12x^2\\
		1
	\end{pmatrix}
	\qquad
	f_2(q) =\begin{pmatrix}
		0\\
		1\\
		0\\
		0
	\end{pmatrix}
\end{equation}
It is easy to check that $\mathrm{Lie}\,(f_1,f_2)(q) = T_qM$ for any point $q\in M$. Indeed, their non-trivial brackets are structured as follows:
\begin{equation}
	\label{eq: f_i Lie brackets}
	[f_1,f_2] = f_3\ne0;\quad [f_1,f_3] = f_4\ne0;\quad [f_1,f_4]=f_5\ne0,\quad [f_2,f_5]=-[f_3,f_4]=f_6\ne0,
\end{equation}
The remaining brackets of $f_i$, $i\le 6$, independent of those listed, are equal to 0. It is easy to check that $\mathrm{span}(f_1,f_2,f_3,f_6)(q)=\R^4$ for any point $q$. Note that the vector fields $f_i$, $i=1,\ldots,6$ are obviously linearly dependent in the space $M=\R^4$. Indeed, direct calculation gives $f_4=xf_6$ and $f_5=yf_6$.
So, let us define the unit ball in the tangent space $T_qM$ as a quadrilateral with vertices $f_1+f_2$, $f_1-f_2$, $f_2-f_1$, and $-f_1-f_2$:
$$
B(q) = \conv (\pm f_1(q)\pm f_2(q)).
$$
Thus, we have defined a polyhedral sub-Finsler structure in the sense of the direct definition. Since $\mathrm{Lie}(f_1,f_2)(q) = \R^4$ for every point $q$, by the Rashevsky–Chow theorem the system is controllable, i.e.,
$$
\forall q_0,q_1\in\R^4\qquad\mathrm{dist}(q_0,q_1) < +\infty.
$$
This structure can also be easily defined dually. To do this, let us compute
$$
\Delta(q) = \mathrm{span}(f_1(q),f_2(q)) = \{uf_1(q) + vf_2(q)\,|\,(u,v)\in\R^2\}
$$
Therefore
$$
\Delta(q) = \ker\zeta_1 \cap\ker\zeta_2.
$$
where
$$
\zeta_1 = dx - y\,dw
\text{ and }
\zeta_2 = dz - \frac12x^2\,dw
$$
Moreover,
$$
B(q) = \big\{\xi\in \Delta(q): \lambda_j[\xi]\le 1, j=1,2,3,4 \big\}
$$
where
$$
\lambda_1 = dy;\quad \lambda_2 = -dy;\quad \lambda_3=dw;\quad \lambda_4 = -dw.
$$
In particular,
$$
\|\xi\| = \begin{cases}
	+\infty,&\text{if }dx[\xi]\ne y\,dw[\xi]\text{ or }dz[\xi]\ne \frac12x^2dw[\xi]\\
	\max\{|dy[\xi]|,|dw[\xi]|\},&\text{otherwise}.
\end{cases}
$$
It is easy to verify that by Filippov's theorem, for any two points, there exists a shortest path connecting them.
\begin{remark}
	The described sub-Finsler structure has the following convenient Fuller symmetry group: for any $\lambda>0$ the map
	$$
	x\mapsto \lambda^2 x;\quad y\mapsto \lambda y;\quad z\mapsto \lambda^5z;\quad w\mapsto \lambda w
	$$
	maps $f_1\mapsto \lambda f_1$ and $f_2\mapsto\lambda f_2$. Therefore, it scales the sub-Finsler metric by a factor of $\lambda$ and, consequently, maps geodesics to geodesics. The geodesics with chattering described below are self-similar with respect to this group if we add translations $z\mapsto z+c_1$, $w\mapsto w+c_2$.
\end{remark}
Let us now show that many geodesics in this sub-Finsler problem exhibit chattering. Fix $q_0,q_1\in\R^4$ and find the shortest path connecting points $q_0$ and $q_1$. As mentioned earlier, the shortest path is a solution to the following time-optimal control problem:
\begin{equation}
	\label{eq: subfinsler fuller control system}
	T\to\min;
	\qquad
	\dot q=vf_1(q) + uf_2(q)\ \Leftrightarrow\
	\begin{cases}
		\dot x = y v,\\
		\dot y = u,\\
		\dot z = \frac12x^2 v,\\
		\dot w = v;
	\end{cases}
	\quad
	|u|\le 1;\ |v|\le 1.
\end{equation}
\begin{equation}
	\label{eq: subfinsler fuller terminal constraints}
	\begin{array}{c}
		x(0) = x_0,\quad y(0) = y_0;\quad z(0) = z_0;\quad w(0) = w_0\\
		x(T) = x_1,\quad y(T) = y_1;\quad z(T) = z_1;\quad w(T) = w_1.
	\end{array}
\end{equation}
Here $q=(x,y,z,w)\in M=\R^4$ is the state variable, and $(u,v)\in\R^2$ is the control.
\begin{theorem}
	\label{thm: subf chattering}
	Assume\footnote{Here $T_F$ and $J_F$ are functions from the Fuller problem, defined in section \S\ref{sec: chattering}.} that $z_1-z_0=J_F(x_0,y_0) + J_F(x_1,-y_1)$ and $w_1-w_0\ge T_F(x_0,y_0) + T_F(x_1,-y_1)$. Let $(x_F(t),y_F(t))$ with control $u_F(t)$ be an optimal trajectory in the problem~\eqref{problem: Fuller finite time} with $t_1=w_1-w_0$. Put
	$$
		z_F(t) = z_0+\frac12\int_0^t x_F^2(s)\,ds,\quad
		w_F(t) = w_0+t,\text{ and }v_F(t)\equiv 1.
	$$
	Then the trajectory $(x_F(t),y_F(t),z_F(t),w_F(t))$ with controls $(u_F(t),v_F(t))$, $t\in[0;t_1]$, is the unique shortest path in the sub-Finsler problem~\eqref{eq: subfinsler fuller control system} connecting the points~\eqref{eq: subfinsler fuller terminal constraints}.
\end{theorem}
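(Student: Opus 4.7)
The plan is to reduce the four-dimensional sub-Finsler time-minimization to the planar finite-time Fuller problem of Lemma~\ref{lm: Fuller finite time}, by observing that the coordinate $w$ acts as a clock that is locked to the physical time. Concretely, from $\dot w=v$ together with $|v|\le 1$ one obtains the a priori lower bound
$$
T \;=\; \int_0^T 1\,dt \;\ge\; \int_0^T v\,dt \;=\; w_1-w_0 \;=:\; t_1
$$
for any admissible trajectory satisfying~\eqref{eq: subfinsler fuller terminal constraints}. Hence the shortest time is at least $t_1$, and it remains to show that the candidate trajectory attains this lower bound and is the only such admissible trajectory.

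For attainment (admissibility), I would simply verify the four terminal conditions for $(x_F,y_F,z_F,w_F)$. The $x$- and $y$-terminal conditions are built into the finite-time Fuller trajectory supplied by Lemma~\ref{lm: Fuller finite time}, which applies because the hypothesis $w_1-w_0\ge T_F(x_0,y_0)+T_F(x_1,-y_1)$ is precisely its standing assumption. The condition $w_F(t_1)=w_0+t_1=w_1$ is automatic from $v_F\equiv 1$, and the condition $z_F(t_1)=z_1$ follows from the hypothesis $z_1-z_0=J_F(x_0,y_0)+J_F(x_1,-y_1)$ together with the fact that Lemma~\ref{lm: Fuller finite time} identifies the optimal value of problem~\eqref{problem: Fuller finite time} as exactly $J_F(x_0,y_0)+J_F(x_1,-y_1)$. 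The pointwise bounds $|u_F|\le 1$ and $|v_F|\le 1$ place the trajectory in the unit ball.

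For uniqueness, suppose another admissible trajectory $(x,y,z,w)$ with control $(u,v)$ also attains $T=t_1$. Equality in $\int_0^{t_1}v\,dt\le t_1$ under the pointwise constraint $v\le 1$ forces $v\equiv 1$ almost everywhere. Substituting back, the remaining three equations become $\dot x=y$, $\dot y=u$ with $|u|\le 1$, and $\dot z=\tfrac12 x^2$, with boundary data $(x_0,y_0),(x_1,y_1)$ and terminal $z$-increment $\tfrac12\int_0^{t_1}x^2\,dt=J_F(x_0,y_0)+J_F(x_1,-y_1)$. Since this right-hand side is precisely the minimum value attainable in problem~\eqref{problem: Fuller finite time} under these boundary data, $(x,y,u)$ must be a minimizer there, and the uniqueness part of Lemma~\ref{lm: Fuller finite time} forces $(x,y,u)=(x_F,y_F,u_F)$; the coordinates $z$ and $w$ are then determined by integration from the initial data.

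The delicate point — and the only real obstacle — is the last step: the terminal condition on the \emph{non-minimized} coordinate $z$ is what forces the two-dimensional $(x,y)$-projection of any other shortest path to coincide with a \emph{minimizer} of an auxiliary cost functional, at which juncture chattering is inherited wholesale from the classical Fuller synthesis via Lemma~\ref{lm: Fuller finite time}. The equalities and inequalities in the hypothesis are calibrated exactly so that this transfer works; relaxing $z_1-z_0=J_F+J_F$ to an inequality would remove the forcing and presumably allow non-chattering competitors.
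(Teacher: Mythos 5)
Your proposal is correct and follows essentially the same route as the paper's proof: the a priori bound $T\ge t_1$ from $\dot w = v\le 1$, the forcing $v\equiv 1$ at equality, and the reduction of the residual $(x,y,z)$-dynamics to the finite-time Fuller problem~\eqref{problem: Fuller finite time} via the calibrated $z$-constraint, with uniqueness inherited from Lemma~\ref{lm: Fuller finite time}. If anything, you are more explicit than the paper at the one place it is terse — namely, that the equality $z_1-z_0=J_F(x_0,y_0)+J_F(x_1,-y_1)$ is what forces the $(x,y,u)$-projection of any competing shortest path to be a minimizer of the auxiliary Fuller functional rather than merely an admissible competitor — which is the right point to flag.
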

In particular, it immediately follows from this theorem that if $(x_0,y_0)\ne (0,0)$ and $(x_1,y_1)\ne (0,0)$, then by Lemma~\ref{lm: Fuller finite time} the shortest path connecting the points~\eqref{eq: subfinsler fuller terminal constraints} is unique and has two chattering regimes.
\begin{proof}
	The specified trajectory satisfies the control system~\eqref{eq: subfinsler fuller control system} and the boundary conditions~\eqref{eq: subfinsler fuller terminal constraints}. Thus, the sub-Finsler distance between these points is certainly no greater than $t_1$. Let us show that it is the unique optimal solution to the time-optimal control problem.
	First, we claim that if some trajectory of the control system~\eqref{eq: subfinsler fuller control system} satisfies the boundary conditions~\eqref{eq: subfinsler fuller terminal constraints}, then the motion time $T$ satisfies the estimate $T\ge t_1$; and if $T = t_1$, then $v(t)=1$ for almost all $t$. Indeed, since $|v|\le 1$, we have
	$$
	t_1 = w(T)-w(0) = \int_0^T \dot w\,dt = \int_0^T v\,dt \le T.
	$$
	and equality is possible only if $v(t)=1$ for a.e. $t$.
	Thus, the trajectory defined in the theorem reaches the endpoint exactly at time $T=t_1$ and is therefore optimal.
	Now let us show that the Fuller trajectory is the unique shortest path connecting the points~\eqref{eq: subfinsler fuller terminal constraints}. Indeed, if some trajectory $(x(t),y(t),z(t),w(t))$ with controls $(u(t),v(t))$ is optimal in the system~\eqref{eq: subfinsler fuller control system}, \eqref{eq: subfinsler fuller terminal constraints}, then the motion time $T$ coincides with $t_1$. Therefore, $v(t)\equiv v_F(t)\equiv 1$. Consequently, the trajectory $(x(t),y(t))$ with control $u(t)$ is optimal in the modified Fuller problem~\eqref{problem: Fuller finite time} on the finite interval. Therefore, by virtue of Lemma~\ref{lm: Fuller finite time}, we have $u(t)\equiv u_F(t)$ and, consequently, the curve $(x(t),y(t),z(t),w(t))$ coincides with the curve specified in the statement of the theorem.
\end{proof}

\section{An explicit example of Finsler geodesics with chattering}
\label{sec: finsl example}

To construct an example of a Finsler manifold with chattering, it is sufficient to slightly modify the example from the previous section. Let the vector fields $f_1$ and $f_2$ in $\R^4$ be given by relation~\eqref{eq: f1 f2 main field}. Then from the commutation relations~\eqref{eq: f_i Lie brackets}, we find
\begin{equation}
	\label{eq: f3 f4 f5 f6}
	f_3(q) = \begin{pmatrix}
		-1\\
		0\\
		0\\
		0
	\end{pmatrix};
	\quad
	f_4(q) = xf_6(q);\quad
	f_5(q) = yf_6(q);\quad
	f_6(q) =\begin{pmatrix}
		0\\
		0\\
		1\\
		0
	\end{pmatrix}.
\end{equation}
Consider the norm with the unit ball in the form of a hyperoctahedron
$$
B(q) = \conv(\pm f_1(q)\pm f_2(q);\pm f_3(q);\pm f_6(q))
$$
with 8 vertices $\pm(f_1(q)+f_2(q))$, $\pm(f_1(q)-f_2(q))$, $\pm f_3(q)$, $\pm f_6(q)$.

It is straightforward to compute the Finsler norm with such a unit ball. Indeed, if $\xi\in T_qM$, then
$$
\xi= \eta_1 f_3(q) + \eta_2 f_2(q) + \eta_3 f_6(q) + \eta_4 f_1(q)
\quad\Longrightarrow\quad
\|\xi\| = \max(|\eta_2|,|\eta_4|) + |\eta_1| + |\eta_3|.
$$
If we express $\eta_i$ in terms of the original coordinates $\xi=(\xi_1,\xi_2,\xi_3,\xi_4)$, we get
\begin{equation}
	\label{eq: explicit Finsler norm}
	\|\xi\| = \max(|\xi_2|,|\xi_4|) + |\xi_1-y\xi_4| + |\xi_3-\tfrac12x^2\xi_4|.
\end{equation}

We claim that shortest paths in this Finsler metric often exhibit chattering. Let us fix the initial and final points $q_{0,1}=({x_{0,1},y_{0,1},z_{0,1},w_{0,1}})$.

\begin{theorem}
	Assume\footnote{Here $T_F$ and $J_F$ are functions from the Fuller problem, defined in section \S\ref{sec: chattering}.} that $z_1-z_0=J_F(x_0,y_0) + J_F(x_1,-y_1)$ and $w_1-w_0\ge T_F(x_0,y_0) + T_F(x_1,-y_1)$. Let $(x_F(t),y_F(t))$ be an optimal trajectory in the problem~\eqref{problem: Fuller finite time} with $t_1=w_1-w_0$. Put
	$$
		z_F(t) = z_0+\frac12\int_0^t x_F^2(s)\,ds,\quad
		w_F(t) = w_0+t.
	$$
	Then the trajectory $(x_F(t),y_F(t),z_F(t),w_F(t))$, $t\in[0;t_1]$, is the unique shortest path in the Finsler problem with norm~\eqref{eq: explicit Finsler norm}, connecting the points~\eqref{eq: subfinsler fuller terminal constraints}.
\end{theorem}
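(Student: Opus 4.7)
The plan is to mirror the structure of the proof of Theorem~\ref{thm: subf chattering}, the key difference being that the constraint ``$v\equiv 1$'' that was built into the sub-Finsler control system must now be extracted from the Finsler norm~\eqref{eq: explicit Finsler norm} itself. First I verify admissibility and compute the length of the candidate. By construction $\dot x_F - y_F\dot w_F = 0$, $\dot z_F - \tfrac12 x_F^2\dot w_F = 0$, and $|\dot y_F|=|u_F|\le 1 = \dot w_F$, so~\eqref{eq: explicit Finsler norm} gives $\|\dot q_F(t)\| = \max(|u_F(t)|,1) = 1$ almost everywhere, and the length equals $t_1 = w_1-w_0$.

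Next I establish the lower bound. For any admissible Lipschitz curve $q(t)=(x,y,z,w)(t)$ on $[0,T]$ joining $q_0$ to $q_1$, formula~\eqref{eq: explicit Finsler norm} trivially implies $\|\dot q\|\ge |\dot w|$, hence
$$
L(q) = \int_0^T\|\dot q\|\,dt \ge \int_0^T|\dot w|\,dt \ge \Big|\int_0^T \dot w\,dt\Big| = w_1-w_0 = t_1,
$$
so the candidate is a shortest path. For uniqueness I chase equality cases. If $L(q)=t_1$, then successively: $\dot w\ge 0$ a.e.\ (from $\int|\dot w|=\int\dot w$), $|\dot y|\le\dot w$ a.e.\ (from $\max(|\dot y|,|\dot w|)=|\dot w|$), and $\dot x = y\dot w$, $\dot z=\tfrac12 x^2\dot w$ a.e.\ (from vanishing of the last two terms of the norm). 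On every subset where $\dot w=0$ all four velocities vanish, so discarding those intervals (a monotone Lipschitz reparametrization that preserves length and endpoints) allows parametrizing by $s = w(t)-w_0\in[0,t_1]$. The reparametrized curve $(\tilde x,\tilde y,\tilde z)(s)$ then satisfies exactly the Fuller dynamics $\tilde x'=\tilde y$, $|\tilde y'|\le 1$, $\tilde z'=\tfrac12\tilde x^2$, with the matching boundary data on $(x,y)$.

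The final step is to invoke Lemma~\ref{lm: Fuller finite time}: the hypothesis $z_1-z_0 = J_F(x_0,y_0)+J_F(x_1,-y_1)$ forces $\tfrac12\int_0^{t_1}\tilde x^2\,ds = \tilde z(t_1)-\tilde z(0)$ to equal the minimum value of problem~\eqref{problem: Fuller finite time}, so the uniqueness part of the lemma yields $(\tilde x,\tilde y)\equiv (x_F,y_F)$. Pulling this back through the reparametrization — together with the time-optimal normalization $\|\dot q\|\le 1$, which combined with $|\dot w|\le\|\dot q\|\le 1$ and $\int\dot w=t_1$ pins down $\dot w\equiv 1$ — recovers the trajectory specified in the theorem. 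I expect the main technical obstacle to be the reparametrization step: carefully verifying that the level sets $\{\dot w=0\}$ carry no motion, that the resulting reparametrized curve is Lipschitz with the claimed Fuller dynamics, and that the length is genuinely preserved. Once this routine bookkeeping is handled, Finsler uniqueness reduces directly to the already established uniqueness in Lemma~\ref{lm: Fuller finite time}.
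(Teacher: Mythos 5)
Your proof is correct and follows the same core strategy as the paper: the $w$-coordinate of the norm forces natural parametrization (and hence $\dot w\equiv 1$, vanishing of the last two terms of the norm, and $|\dot y|\le 1$), after which the problem collapses to the finite-time Fuller problem and uniqueness is delegated to Lemma~\ref{lm: Fuller finite time}. The one genuine presentational difference is that the paper carries out this reduction through the time-optimal control formulation~\eqref{eq: time minimiztion problem} with eight controls $u_1,\ldots,u_8$ (showing $u_1+u_2\equiv 1$ and $u_i\equiv 0$ for $i\ge 3$), where the bound $\|\dot q\|\le 1$ is built into the simplex constraint and no reparametrization is needed; you instead read the equality conditions directly off the explicit norm formula~\eqref{eq: explicit Finsler norm} for an arbitrary Lipschitz parametrization and then reparametrize by $s=w(t)-w_0$. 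Your reparametrization step is handled correctly — the key observation that all four velocity components vanish wherever $\dot w=0$ (so the curve is stationary there) is exactly what makes the monotone reparametrization well-defined and Lipschitz — so the two arguments are equivalent in substance; yours is marginally more self-contained since it does not presuppose the time-optimal reformulation, at the price of that short reparametrization bookkeeping.
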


Again, it immediately follows from this theorem that if $(x_0,y_0)\ne (0,0)$ and $(x_1,y_1)\ne (0,0)$, then by Lemma~\ref{lm: Fuller finite time} the shortest path connecting the points~\eqref{eq: subfinsler fuller terminal constraints} is unique and has two chattering regimes.

\begin{proof}
	
	To find the shortest paths, let us write down the corresponding time-optimal control problem~\eqref{eq: time minimiztion problem}. Since the unit ball $B(q)$ has 8 vertices $\pm f_1(q)\pm f_2(q)$, $\pm f_3(q)$ and $\pm f_6(q)$, the time-optimal control problem~\eqref{eq: time minimiztion problem} takes the form
	$$
	T\to\min
	$$
	$$
	\begin{aligned}
		\dot q =& u_1 (f_1(q) + f_2(q)) + u_2(f_1(q)-f_2(q)) + u_3(-f_1(q)+f_2(q)) + u_4(-f_1(q)-f_2(q))+\\
		&+ u_5 f_3(q) - u_6 f_3(q) + u_7 f_6(q) - u_8 f_6(q)
	\end{aligned}
	$$
	$$
	u_i\ge 0;\ \sum_i u_i = 1
	$$
	with boundary conditions \eqref{eq: subfinsler fuller terminal constraints}.
	
	Note that the trajectory $q_F(t) = (x_F(t),y_F(t),z_F(t),w_F(t))$ specified in the theorem has a natural parameterization, since on it $u_i(t)\equiv0$ for $i\ge 3$, $u_1(t)+u_2(t)\equiv 1$, and from the equality $u_1(t)-u_2(t)=u_F(t)\in[-1;1]$ it follows that $u_1(t)\ge 0$ and $u_2(t)\ge 0$. Consequently, the distance between the points~\eqref{eq: subfinsler fuller terminal constraints} is certainly no greater than the travel time along the curve $q_F(t)$, i.e., $t_1$.
	
	Let us show that the distance is indeed equal to $t_1$ and there are no other shortest paths connecting the points~\eqref{eq: subfinsler fuller terminal constraints}. The reasoning almost verbatim repeats the proof of Theorem~\ref{thm: subf chattering}. Indeed, if some trajectory connects the points~\eqref{eq: subfinsler fuller terminal constraints} in time $T\ge 0$, then
	$$
	t_1 = w(T)-w(0) = \int_0^T \dot w\,dt = \int_0^T u_1 + u_2 - u_3 - u_4\,dt \le T
	$$
	since $u_3\ge 0$, $u_4\ge 0$ and $u_1+u_2\le \sum_i u_i = 1$. Therefore, we get $u_1+u_2\equiv 1$ and $u_i\equiv0$ for $i=3,4$. Consequently, $u_i\equiv0$ also for $i=5,6,7,8$. If we denote $u=u_1-u_2$, we find that on any trajectory connecting the points~\eqref{eq: subfinsler fuller terminal constraints}, we have $\dot q = f_1(q) + uf_2(q)$ and $u\in[-1;1]$. It remains to use Lemma~\ref{lm: Fuller finite time} analogously to the proof of Theorem~\ref{thm: subf chattering}.
	
\end{proof}

\section{Chattering on Carnot groups}
\label{sec: Carnot}

In this section, we construct an explicit example of a Carnot group of step $5$, on which shortest paths exhibit chattering. We note that the explicit form of geodesics in left-invariant Finsler and sub-Finsler problems has been studied in a large number of works (see, e.g.,~\cite{ArdentovSachkov,LokutsievskiyCT,LokutsievskiyCT2,BBLDS}), but chattering has not been detected in them. The reason is that the Carnot group presented below has step $5$, whereas explicit formulas for geodesics were sought only on groups of step $2$ and $3$. Let us start with the construction of the group itself.

\subsection{Construction of a 6-dimensional Carnot group}

Let us define vector fields on $\R^6=\{x=(x_1,\ldots,x_6)\}$
$$
g_1 = \begin{pmatrix}1\\0\\-x_2\\-x_3\\-x_1x_3\\\frac12x_3^2 \end{pmatrix}
\quad
g_2 = \begin{pmatrix}0\\1\\0\\0\\0\\0 \end{pmatrix}
\quad
g_3 = \begin{pmatrix}0\\0\\1\\0\\0\\0 \end{pmatrix}
\quad
g_4 = \begin{pmatrix}0\\0\\0\\1\\x_1\\-x_3 \end{pmatrix}
\quad
g_5 = \begin{pmatrix}0\\0\\0\\0\\1\\x_2 \end{pmatrix}
\quad
g_6 = \begin{pmatrix}0\\0\\0\\0\\0\\1 \end{pmatrix}
$$
\begin{wrapfigure}{r}{4cm}
	\centering\begin{tikzpicture}
		% Define vertices
		\node (g1) at (0,0) {$g_1$};
		\node (g2) at (2,0) {$g_2$};
		\node (g3) at (1,-1) {$g_3$};
		\node (g4) at (1,-2) {$g_4$};
		\node (g5) at (1,-3) {$g_5$};
		\node (g6) at (2,-4) {$g_6$};
		
		% Draw edges
		\draw[->] (g1) -- (g3);
		\draw[->] (g2) -- (g3);
		\draw[->] (g1) -- (g4);
		\draw[->] (g3) -- (g4);
		\draw[->] (g1) -- (g5);
		\draw[->] (g4) -- (g5);
		\draw[->] (g2) -- (g6);
		\draw[->] (g5) -- (g6);
		
	\end{tikzpicture}
	\caption{Structure of brackets of vector fields $g_i$.}
	\label{fig: brackets graph}
\end{wrapfigure}
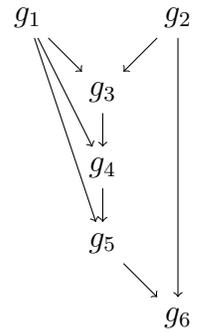
The idea behind choosing such vector fields is as follows: the vector fields $f_1$ and $f_2$ from paragraph~\ref{sec: subf example} generate a graded nilpotent Lie algebra over $\R$, but over the module of smooth functions their non-trivial brackets $f_i$, $i=1,\ldots,6$, are linearly dependent (see \eqref{eq: f3 f4 f5 f6}). The vector fields $g_i$ are chosen such that they are linearly independent at every point (and, therefore, over the module of smooth functions) and form a Lie algebra over $\R$ with the same commutation table~\eqref{eq: f_i Lie brackets} (see Fig.~\ref{fig: brackets graph}). Indeed, direct verification shows that the vector fields $g_i$ form a graded nilpotent Lie algebra of step 5 with the following non-trivial brackets:
$$
[g_1,g_2] = g_3;\quad [g_1,g_3] = g_4;\quad [g_1,g_4]=g_5,\quad [g_2,g_5]=-[g_3,g_4]=g_6.
$$

Moreover, any linear combination of these vector fields with constant coefficients is a complete vector field, so $\R^6$ admits exponential coordinates (let's place the start of motion at the origin $0_6\in\R^6$ for convenience)\footnote{Here $\mathrm{Exp}(f)(x_0)$ denotes the solution of the ODE $\dot x=f(x)$ with the initial condition $x(0)=x_0$ at time $t=1$, $\mathrm{Exp}(f)(x_0)\stackrel{def}{=}x(1)$.}:
$$
x=\mathrm{Exp}\left(\sum_{i=1}^6 a_ig_i\right)(0_6),
$$
From which, solving a simple system of ODEs, we find
$$
x_1=a_1,\
x_2=a_2,\
x_3=a_3 - \frac{1}{2} a_1 a_2,\
x_4=a_4 + \frac{1}{6} a_1^2 a_2 - \frac{1}{2} a_1 a_3,
$$
$$
x_5=a_5+\frac{1}{8} a_1^3 a_2  -\frac{1}{3} a_1^2a_3   +\frac{1}{2} a_1a_4,\
x_6=a_6+\frac{1}{40}a_1^3 a_2^2-\frac{1}{8} a_1^2a_2a_3+\frac{1}{6} a_1(a_3^2+a_2a_4)-\frac{1}{2}a_3 a_4+\frac{1}{2} a_2a_5.
$$
This change of variables is easily inverted, as it is lower triangular. Thus, it defines global coordinates $(x_1,\ldots,x_6)$ (i.e., a diffeomorphism) on the unique simply connected Carnot group $\mathfrak{G}\simeq\R^6$, with a Lie algebra isomorphic to $\mathrm{Lie}(g_1,\ldots,g_6)$, if we treat $a_i$ as exponential coordinates on $\mathfrak{G}$.

Let us compute the formulas for multiplication $*$ on $\mathfrak{G}$ in coordinates $x$. This can be done as follows: if $x=\exp\left(\sum_{i=1}^6 a_ig_i\right)$, $y=\exp\left(\sum_{i=1}^6 b_ig_i\right)$ and $z=x*y=\exp\left(\sum_{i=1}^6 c_ig_i\right)$, then
$$
\sum_{i=1}^6 c_ig_i = \log \left(\exp\left(\sum_{i=1}^6 a_ig_i\right)\exp\left(\sum_{i=1}^6 b_ig_i\right)\right),
$$
So, let us compute the right-hand side using the Dynkin–Baker–Campbell–Hausdorff formula\footnote{Due to the nilpotency of the group $\mathfrak{G}$, this formula will only have a finite number of terms.}. As a result, we obtain an expression for $c_i$ in terms of $a_i$ and $b_i$, which in coordinates $x,y,z$ has the form
$$
z = x * y =
\begin{pmatrix}
	x_1+y_1\\
	x_2+y_2\\
	x_3+y_3-x_2y_1\\
	x_4+y_4-x_3y_1+\frac{1}{2} x_2 y_1^2\\
	x_5+y_5+x_1 y_4-x_1x_3y_1+\frac13 x_2y_1^3 -\frac12 x_3y_1^2+\frac12x_1x_2y_1^2\\
	x_6+y_6-x_3y_4+x_2y_5+\frac12 x_3^2 y_1-\frac12 x_2 x_3 y_1^2+\frac16x_2^2 y_1^3
\end{pmatrix}
$$
Thus, $\mathfrak{G}$ with the constructed multiplication operation $*$ becomes a Carnot group of step $5$ and growth vector $(2,1,1,1,1)$. The coordinates $(x_1,\ldots,x_6)$ are naturally called Fuller coordinates on the group $\mathfrak{G}$. Note that the formulas for multiplication $*$ and for left-invariant vector fields in the classical exponential coordinates on $\mathfrak{G}$ turn out to be much more cumbersome. For example, $c_6 = a_6+b_6 + P(a,b)$, where $P$ is a polynomial with 26 terms.

\begin{remark}
	The ``Fuller'' symmetry group on $\mathfrak{G}$ has the form
	$$
	x_1\mapsto\lambda x_1;\quad
	x_2\mapsto\lambda x_2;\quad
	x_3\mapsto\lambda^2x_3;\quad
	x_4\mapsto\lambda^3x_4;\quad
	x_5\mapsto\lambda^4x_5;\quad
	x_6\mapsto\lambda^5 x_6.
	$$
	and coincides with the classical group of dilations arising from the gradation of the Lie algebra of the Carnot group $\mathfrak{G}$. In addition, there is a group of shifts $x_4\mapsto x_4+c_1$, $x_5\mapsto x_5+c_2$, $x_6\mapsto x_6+c_3$, which preserves the vector fields $g_i$.
\end{remark}

\subsection{Left-invariant sub-Finsler structure}
\label{subsec: Carnot sub Finsler}

Let us introduce a left-invariant sub-Finsler structure on $\mathfrak{G}$. Let the Lie algebra $\mathfrak{g}=T_{\id}\mathfrak{G}$ in the basis $g_i(\id)$ be given by coordinates $\eta_i$, $\mathfrak{g}=\{\sum_{i=1}^6\eta_ig_i(\id)\}\simeq\R^6$. Then let's set for $\eta\in\mathfrak{g}$
\begin{equation}
	\label{eq: subnorm Carnot Fuller}
	\|\eta\| = \begin{cases}
		\max\{|\eta_1|,|\eta_2|\},&\text{if }\eta_3=\eta_4=\eta_5=\eta_6=0;\\
		+\infty,&\text{otherwise.}
	\end{cases}
\end{equation}
For an arbitrary vector $\xi\in T_x\mathfrak{G}$, let's define the left-invariant norm as usual by shifting to the identity: $\eta=dL_x^{-1}[\xi]\in\mathfrak{g}$ and $\|\xi\|\stackrel{def}{=}\|\eta\|$. Equivalently: if $\xi = \sum_{i=1}^6 \eta_ig_i(x)$, then $\|\xi\|$ is given by formulas~\eqref{eq: subnorm Carnot Fuller}.

\begin{theorem}
	\label{thm: carnot chattering}
	On the Carnot group $\mathfrak{G}$ with the left-invariant sub-Finsler structure~\eqref{eq: subnorm Carnot Fuller}, there exist pairs of points such that they are connected by a unique shortest path, and it exhibits chattering.
\end{theorem}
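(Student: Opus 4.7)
The plan is to follow the template of Theorem~\ref{thm: subf chattering}, reducing the Carnot time-optimal problem to the finite-time Fuller problem of Lemma~\ref{lm: Fuller finite time}. Written in coordinates $(x_1,\ldots,x_6)$, the control system $\dot x = v g_1(x) + u g_2(x)$ with $\max(|u|,|v|)\le 1$ reads
$$
\dot x_1 = v,\quad \dot x_2 = u,\quad \dot x_3 = -x_2 v,\quad \dot x_4 = -x_3 v,\quad \dot x_5 = -x_1 x_3 v,\quad \dot x_6 = \tfrac12 x_3^2 v.
$$
The key observation is that when $v\equiv 1$, the pair $(X,Y)=(-x_3,x_2)$ satisfies the Fuller ODEs $\dot X = Y$, $\dot Y = u$, and the coordinate $x_6$ accumulates precisely the Fuller quadratic cost $\tfrac12\int x_3^2\,dt$.

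To produce the required pair of points, fix any $(X_1,Y_1)\ne(0,0)$ and any $t_1\ge T_F(X_1,-Y_1)$, and let $(X_F(t),Y_F(t),u_F(t))$ be the unique optimizer from Lemma~\ref{lm: Fuller finite time} for the finite-time Fuller problem~\eqref{problem: Fuller finite time} connecting $(0,0)$ to $(X_1,Y_1)$ in time $t_1$. Take $q_0 = \id \in \mathfrak{G}$ (the origin $0_6$ in the Fuller coordinates) and define $q_1$ to be the value at $t=t_1$ of the curve launched from $q_0$ with controls $v\equiv 1$, $u\equiv u_F$. By construction the $x_1$-coordinate of $q_1$ equals $t_1$, its $x_6$-coordinate equals the Fuller-optimal cost $J_F(X_1,-Y_1)$, and the remaining coordinates $x_2,x_3,x_4,x_5$ are obtained by direct integration along the Fuller trajectory.

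Uniqueness of the shortest path and the presence of chattering will then follow almost verbatim from the argument of Theorem~\ref{thm: subf chattering}. Any admissible trajectory on $[0,T]$ from $q_0$ to $q_1$ satisfies $t_1 = x_1(T) - x_1(0) = \int_0^T v\,dt \le T$, so $T\ge t_1$ with equality forcing $v\equiv 1$ almost everywhere; since the curve constructed above achieves $T=t_1$, it is time-optimal. On any optimizer the pair $(-x_3,x_2)$ is then a Fuller trajectory with the same endpoints as $(X_F,Y_F)$, while the prescribed terminal value of $x_6$ pins its Fuller cost to the minimum $J_F(X_1,-Y_1)$; the uniqueness part of Lemma~\ref{lm: Fuller finite time} forces $(x_2,x_3,u)=(Y_F,-X_F,u_F)$, whereupon $x_4,x_5,x_6$ are determined by quadrature. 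Chattering of $u$ as $t\to t_1-T_F(X_1,-Y_1)+0$ is immediate from the same lemma since $(X_1,Y_1)\ne(0,0)$.

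The only step that requires genuine care, beyond routine bookkeeping, is the reduction itself: it is the $x_1$-constraint that forces $v\equiv 1$, and only with $v\equiv 1$ in hand is the $x_6$-constraint equivalent to a Fuller-cost constraint. The auxiliary coordinates $x_4, x_5$ play no active role because $\dot x_4 = -x_3 v$ and $\dot x_5 = -x_1 x_3 v$ are wholly determined by $x_1,x_3,v$; they contribute no additional constraint, as their terminal values are defined to be exactly what the Fuller trajectory produces. I would verify that chain of implications most carefully; the rest is a direct transcription of the proof of Theorem~\ref{thm: subf chattering}.
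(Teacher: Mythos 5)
Your proof is correct, but it takes a genuinely different route from the paper's. The paper constructs a submersion $\pi\colon\mathfrak{G}\to M=\R^4$ onto the ambient space of \S~\ref{sec: subf example} by $\pi(x_1,\ldots,x_6)=(-x_3,x_2,x_6,x_1)$, checks that $d\pi[g_i]=f_i$, and proves two properties: $\pi$ preserves lengths of horizontal curves (hence is $1$-Lipschitz for $d_{SF}$), and every horizontal curve on $M$ has a unique horizontal lift to $\mathfrak{G}$ of the same length once the initial point is fixed. Together these show that lifts of shortest paths on $M$ are shortest paths on $\mathfrak{G}$, and Theorem~\ref{thm: subf chattering} finishes the argument. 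You instead bypass $\pi$ entirely: you write the horizontal dynamics $\dot x=vg_1+ug_2$ in the Fuller coordinates, use $\dot x_1=v$ to force $v\equiv1$ and $T=t_1$, note that $(-x_3,x_2)$ then solves the Fuller ODE while the $x_6$-endpoint pins the Fuller cost, and quote Lemma~\ref{lm: Fuller finite time} directly. Both are valid; the paper's submersion argument is more structural (it explains the example as a lift and transfers the full family of chattering geodesics from $\R^4$), whereas your coordinate computation is more elementary and self-contained and in fact closely parallels the paper's own proof of the Finsler companion result in \S~\ref{subsec: Carnot Finsler}. Your normalization $q_0=\id$ (so $(x_0,y_0)=(0,0)$) yields chattering only near the $q_1$-end, which is enough for the statement; taking $(x_0,y_0)\ne(0,0)$ as well would give chattering at both ends as in the paper. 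You should also note, for completeness, that left-invariance lets you reduce to $q_0=\id$ without loss of generality, and that the horizontality constraint ($\eta_3=\cdots=\eta_6=0$) is exactly what justifies restricting attention to $\dot x=vg_1+ug_2$.
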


An explicit construction of such pairs of points is given in the proof of the theorem.

\medskip

Let us note one more circumstance. By Pansu's theorem (see~\cite{Pansu}), sub-Finsler Carnot groups naturally arise as Gromov–Hausdorff limits of rescaled Cayley graphs of finitely generated virtually nilpotent discrete groups. In connection with this theorem, a natural question arises:

\begin{question}
	What is the structure of geodesics on the Cayley graph of the following discrete nilpotent finitely generated group $G\subset \mathfrak{G}$? Let $\sigma_1=(1,0,0,0,0,0)\in\mathfrak{G}$ and $\sigma_2=(0,1,0,0,0,0)\in\mathfrak{G}$ and set $G=\langle\kappa_1,\kappa_2\rangle\subset \mathfrak{G}$, where $\kappa_1=(1,1,0,0,0,0)=\sigma_1\sigma_2$ and $\kappa_2=(1,-1,0,0,0,0)=\sigma_1\sigma_2^{-1}$. From the multiplication formulas on $\mathfrak{G}$ it easily follows that the discrete group $G_1=\langle\sigma_1,\sigma_2\rangle\supset G$ is a lattice (and, hence, the group $G$ itself), since $G_1=\{(x_1,x_2,x_3,\frac12x_4,\frac16x_5,\frac16x_6)|x_i\in\mathbb{Z}\}$. The elements $\sigma_i$ satisfy the following non-trivial commutation relations (where $[x,y]=xyx^{-1}y^{-1}$):
	$$
	[\sigma_1,\sigma_2] = \sigma_3 = \left(0,0,1,\frac12,\frac16,-\frac16\right);\quad
	[\sigma_1,\sigma_3] = \sigma_4 = \left(0,0,0,1,1,-\frac12\right);
	$$
	$$
	[\sigma_1,\sigma_4] = \sigma_5 = (0,0,0,0,1,0);\quad
	[\sigma_2,\sigma_3] = \sigma_6 = \left(0,0,0,0,0,\frac16\right);
	$$
	$$
	[\sigma_2,\sigma_4] = \sigma_6^6;\quad
	[\sigma_2,\sigma_5] = \sigma_6^6;\quad
	[\sigma_3,\sigma_4] = \sigma_6^{-6}.
	$$
	The remaining commutators of $\sigma_i$ are trivial. The corresponding Gromov--Hausdorff limit coincides with the Carnot group $\mathfrak{G}$ equipped with left-invariant sub-Finsler structure~\eqref{eq: subnorm Carnot Fuller} (see~\cite[(23)]{Pansu}).
\end{question}

\begin{proof}[Proof of Theorem~\ref{thm: carnot chattering}.]
	
	Consider the map $\pi$ from $\mathfrak{G}$ to the space $M=\R^4=\{q=(x,y,z,w)\}$ from paragraph~\ref{sec: subf example}, given by the formulas
	$$
	x=-x_3;\quad y=x_2;\quad z=x_6;\quad w=x_1;
	$$
	that is
	$$
	\pi(x_1,\ldots,x_6) = (-x_3,x_2,x_6,x_1).
	$$
	It is easy to verify that $d\pi[g_i]=f_i$ for every $i$.
	
	The proof of the theorem is based on two key properties of the map $\pi$. First, if $\xi\in T\mathfrak{G}$ and $\|\xi\|<\infty$, then $\|\xi\|=\|d\pi[\xi]\|$. Therefore, the map $\pi$ preserves the lengths of horizontal curves, and, consequently, does not increase the sub-Finsler distance between points:
	$$
	\forall x^0,x^1\in \mathfrak{G}\qquad d_{SF}(\pi(x^0),\pi(x^1)) \le d_{SF}(x^0,x^1).
	$$
	
	Second, every horizontal curve on $M$ can be uniquely lifted (up to the choice of the initial point) to a horizontal curve in $\mathfrak{G}$, and their lengths coincide. Indeed, if $q(t)$ is a horizontal (Lipschitz) curve on $M$, then there exist $u_1(t),u_2(t)\in L_\infty$ such that $\dot q = u_1f_1(q) + u_2f_2(q)$ for almost all $t$. Suppose some horizontal (Lipschitz) curve $x(t)$ on $\mathfrak{G}$ projects to $q(t)$, $\pi(x(t))=q(t)$. Then $\dot x = v_1g_1(x) + v_2g_2(x)$ for some $v_1(t),v_2(t)\in L_\infty$. Therefore $d\pi(\dot x(t)) = v_1(t)f_1(q(t)) + v_2(t)f_2(q(t))$. Since the vectors $f_1(q)$ and $f_2(q)$ are linearly independent at any point $q\in M$, we get that $v_1(t)=u_1(t)$ and $v_2(t)=u_2(t)$ for almost all $t$. Therefore, if an initial point $x(0)\in\mathfrak{G}$ is given such that $\pi(x(0))=q(0)$, then the ODE $\dot x = u_1(t)g_1(x) + u_2(t)g_2(x)$ has a unique solution on any time interval (due to the completeness of $u_1g_1+u_2g_2$). The lengths of the curves coincide, since $\|\dot x\| = \max\{|u_1(t)|,|u_2(t)|\}$ and $\|\dot q\| = \max\{|v_1(t)|,|v_2(t)|\}$.
	
	Combining both properties of the map $\pi$, we get that for every shortest path $q(t)\in M$, $t\in[0;1]$ on $M$, there exists a unique horizontal lift $x(t)\in \mathfrak{G}$, $\pi(x(t))=q(t)$, up to the choice of the initial point $x(0)$. Therefore
	$$
	d_{SF}(q(0),q(1)) \le d_{SF}(x(0),x(1)) \le
	\mathrm{length}(x(t)) = \mathrm{length}(q(t)) =
	d_{SF}(q(0),q(1))
	$$
	Thus, the lift of a shortest path on $M$ is a shortest path on $\mathfrak{G}$. It remains to apply Theorem~\ref{thm: subf chattering}.
\end{proof}

\subsection{Left-invariant Finsler structure}
\label{subsec: Carnot Finsler}

Let us now move on to the left-invariant Finsler problem on $\mathfrak{G}$. Define the unit ball of the left-invariant Finsler norm on $\mathfrak{G}$ as a hyperoctahedron with 12 vertices:
$$
B(x) = \conv(\pm g_1(x)\pm g_2(x);\pm g_3(x);\pm g_4(x);\pm g_5(x);\pm g_6(x)),
$$
that is, for $\xi=(\xi_1,\ldots,\xi_6)\in T_x\mathfrak{G}$ we have the left-invariant norm
\begin{equation}
	\label{eq: norm Finsler Carnot}
	\|\xi\| = \max(|\xi_1|,|\xi_2|) + |\xi_3+x_2\xi_1| + |\xi_4+x_3\xi_1| +
	|\xi_5-x_1\xi_4| + |\xi_6 + \tfrac12x_3^2\xi_1 + (x_1x_2+x_3)\xi_4 - x_2\xi_5|.
\end{equation}

\begin{theorem}
	On the Carnot group $\mathfrak{G}$ with the left-invariant Finsler norm~\eqref{eq: norm Finsler Carnot}, there exist pairs of points such that they are connected by a unique shortest path, and it exhibits chattering.
\end{theorem}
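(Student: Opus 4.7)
The plan is to mimic the proof of the Finsler theorem on $M$ from Section~\ref{sec: finsl example}, but work upstairs on $\mathfrak{G}$ and exploit the lift/project construction of the proof of Theorem~\ref{thm: carnot chattering}. Recall that $\pi(x_1,\ldots,x_6)=(-x_3,x_2,x_6,x_1)$ satisfies $d\pi[g_i]=f_i$. Although $\pi$ no longer preserves the Finsler norm globally (e.g. $d\pi[g_4]=x f_6$ depends on the base point), the $w$-coordinate inequality that killed the extraneous vertices in the $M$-proof reappears verbatim as an $x_1$-coordinate inequality in $\mathfrak{G}$.

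I fix the endpoints as follows. Pick $x^0\in\mathfrak{G}$ with $\pi(x^0)=(x_0,y_0,z_0,w_0)$ and $(x_0,y_0)\ne(0,0)$; choose $(x_1,y_1,z_1,w_1)$ with $(x_1,y_1)\ne(0,0)$, $z_1-z_0=J_F(x_0,y_0)+J_F(x_1,-y_1)$ and $t_1:=w_1-w_0\ge T_F(x_0,y_0)+T_F(x_1,-y_1)$. Let $q_F(t)$ be the Fuller-type trajectory on $M$ produced by the theorem in Section~\ref{sec: finsl example}, and let $x_F(t)$ be its unique horizontal lift to $\mathfrak{G}$ with $x_F(0)=x^0$ (as in the proof of Theorem~\ref{thm: carnot chattering}); put $x^1:=x_F(t_1)$. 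Along $x_F$ one has $u_1+u_2\equiv 1$, $u_1-u_2\equiv u_F$, and $u_3,\ldots,u_{12}$ all vanishing, so $x_F$ is admissible with travel time $t_1$; hence the Finsler distance from $x^0$ to $x^1$ is at most $t_1$.

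For the lower bound, enumerate the $12$ vertices $\pm g_1\pm g_2,\,\pm g_3,\,\pm g_4,\,\pm g_5,\,\pm g_6$. Only the first four have a nonzero first component (equal to $\pm 1$), so on any admissible curve $\dot x_1 = u_1+u_2-u_3-u_4$. Using $x^1_1-x^0_1 = w_1-w_0 = t_1$, nonnegativity of the $u_i$, and $\sum_i u_i = 1$, integration gives
\[
 t_1 \;=\; \int_0^T (u_1+u_2-u_3-u_4)\,dt \;\le\; \int_0^T (u_1+u_2)\,dt \;\le\; T,
\]
so $T\ge t_1$, with equality forcing $u_3=u_4=0$ and $u_1+u_2=1$ a.e.; the constraint $\sum_i u_i=1$ then kills $u_5,\ldots,u_{12}$ automatically.

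Thus every shortest path satisfies $\dot x=g_1(x)+u\,g_2(x)$ with $u:=u_1-u_2\in[-1;1]$. Projecting by $\pi$, the curve $q(t)=\pi(x(t))$ obeys $\dot q = f_1(q)+u f_2(q)$ in $M$ with the prescribed endpoints, and its $z$-coordinate yields $\tfrac12\int_0^{t_1} x(t)^2\,dt = z_1-z_0 = J_F(x_0,y_0)+J_F(x_1,-y_1)$. By Lemma~\ref{lm: Fuller finite time} this value is the minimum of the Fuller functional on $[0;t_1]$ with these boundary data and is attained uniquely by the Fuller trajectory, so $u\equiv u_F$; unique solvability of $\dot x=g_1(x)+u_F(t)g_2(x)$ with $x(0)=x^0$ then gives $x\equiv x_F$, and chattering transports to the Carnot shortest path via Lemma~\ref{lm: Fuller finite time}. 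The only step I anticipate any real subtlety in is the first-coordinate bookkeeping of the third paragraph: it depends critically on the hyperoctahedral choice of vertices and on $g_1,g_2$ having the explicit form given in Section~\ref{sec: Carnot}, so one has to verify carefully that no vertex except $\pm g_1\pm g_2$ contributes to $\dot x_1$. The remaining steps are direct adaptations of already-established arguments.
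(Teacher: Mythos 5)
Your argument is correct and is essentially the paper's own proof in lightly different clothing: both hinge on the $x_1$-coordinate estimate $t_1=\int_0^T\dot x_1\,dt\le T$, whose equality case forces $\dot x=g_1(x)+u\,g_2(x)$ with $|u|\le1$, after which the $x_6$-constraint and Lemma~\ref{lm: Fuller finite time} pin the trajectory to the unique finite-time Fuller solution. The only deviations are cosmetic --- you derive the inequality from the $12$-vertex control parameterization ($\dot x_1=u_1+u_2-u_3-u_4$) rather than from the first term of the closed-form norm~\eqref{eq: norm Finsler Carnot}, and you specify $x^1$ implicitly as the terminus of the $\pi$-lift of the $M$-trajectory rather than by the paper's explicit coordinates $x^1=(w_1,-y_1,-x_1,A,B,z_1)$ --- but these package the same computation and lead to the same conclusion.
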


An explicit construction of such pairs of points is given in the proof of the theorem.

\begin{proof}
	The proof essentially repeats the ideas from the proof of Theorem~\ref{thm: subf chattering}. Consider the initial points
	$$
	x^0=(w_0,y_0,-x_0,0,0,z_0);\quad
	x^1=(w_1,-y_1,-x_1,A,B,z_1).
	$$
	The constants $x_{0,1},y_{0,1}$ are arbitrary, the constants $w_{0,1}$ must satisfy the inequality $w_1-w_0\ge T_F(x_0,y_0) + T_F(x_1,-y_1)$, the constants $z_{0,1}$ must satisfy the equality $z_1-z_0=J_F(x_0,y_0) + J_F(x_1,-y_1)$, and the constants $A$ and $B$ will be defined below.
	
	Let $(x_F(t), y_F(t))$ with control $u_F(t)$ be the optimal solution in the modified Fuller problem~\eqref{problem: Fuller finite time} with $t_1=w_1-w_0$. This solution exists and is unique by Lemma~\ref{lm: Fuller finite time}.
	
	Define the curve $\hat x(t)$ on $\mathfrak{G}$:
	$$
	\hat x_1(t) = w_0+t;\
	\hat x_2(t) = y_F(t);\
	\hat x_3(t) = -x_F(t);
	$$
	$$
	\hat x_4(t) = -\int_0^t \hat x_3(s)\,ds;\
	\hat x_5(t) = -\int_0^t \hat x_1(s)\hat x_3(s)\,ds;\
	\hat x_6(t) = z_0+\tfrac12\int_0^t \hat x_3^2(s)\,ds.
	$$
	Direct substitution shows that $\dot{\hat x} = g_1(\hat x) + u_F(t)g_2(\hat x)$. Consequently, according to~\eqref{eq: norm Finsler Carnot} we have $\|\dot{\hat x}\| = 1$. Thus, $\hat x(t)$ is a naturally parameterized curve. Let us set $A=\hat x_4(t_1)$, $B=\hat x_5(t_1)$. Then for $t\in[0;t_1]$ the curve $\hat x(t)$ connects the points $x^0$ and $x^1$. Therefore, the distance between points $x^0$ and $x^1$ does not exceed $t_1$.
	
	Let us show that if some curve $x(t)\in\mathfrak{G}$ connects $x^0$ and $x^1$ and has length no greater than $t_1$, then it has length exactly $t_1$ and coincides with $\hat x(t)$. Indeed, without loss of generality, assume that the curve $x(t)$ is defined for $t\in[0;T]$, $T\le t_1$, and is parameterized naturally, $\|\dot x\|=1$. Then, according to the definition of the norm~\eqref{eq: norm Finsler Carnot}, we have
	$$
	t_1 = w_1-w_0 = x_1(T) - x_1(0) = \int_0^T\dot x_1(t)\,dt \le \int_0^T \|\dot x(t)\|\,dt = T
	$$
	Therefore $T=t_1$, $\dot x_1(t)\equiv 1$, $|\dot x_2|\le 1$. Moreover, since the first term in~\eqref{eq: norm Finsler Carnot} equals $1$ for $\dot x(t)$, the remaining terms must equal $0$, i.e.,
	$$
	\dot x_3 = -x_2\dot x_1;\
	\dot x_4 = -x_3\dot x_1;\
	\dot x_5 = -x_1\dot x_4;\
	\dot x_6 = -\tfrac12 x_3^2 \dot x_1 - (x_1x_2+x_3)\dot x_4 + x_2\dot x_5.
	$$
	Direct substitution of $\dot x_1\equiv1$ gives $\dot x = g_1(x) + u g_2(x)$, where $u=\dot x_2\in[-1;1]$. In particular, the pair $(-x_3(t),x_2(t))$ satisfies the control system in the Fuller problem, and
	$$
	x_6(t_1)-x_6(0)=\tfrac12\int_0^{t_1} x_3^2(t)\,dt=z_1-z_0=J_F(x_0,y_0) + J_F(x_1,-y_1).
	$$
	Therefore, the curve $(-x_3(t),x_2(t))$ is optimal in the modified Fuller problem~\eqref{problem: Fuller finite time} by Lemma~\ref{lm: Fuller finite time}. That is, $x_3(t)=-x_F(t)$ and $x_2(t)=y_F(t)$. Hence $x_2(t)\equiv \hat x_2(t)$, $x_3(t)\equiv \hat x_3(t)$. Furthermore, $x_1(t)\equiv \hat x_1(t)=w_0+t$, and the equalities $x_i(t)\equiv \hat x_i(t)$ for $i\ge 4$ follow from the integral formulas for them.
	
	Thus, with an appropriate choice of constants $A$ and $B$, the curve $\hat x(t)$ is the unique shortest path connecting points $x^0$ and $x^1$, and this curve exhibits chattering if $(x_0,y_0)\ne (0,0)$ or $(x_1,y_1)\ne(0,0)$.
\end{proof}

\section{Theorem on the generality of chattering regimes in polyhedral Finsler and sub-Finsler structures}
\label{sec: general theorem}

In this section, we will use the direct definition~\ref{defn: polyhedral1} of the sub-Finsler norm.

\begin{definition}
	We say that vectors $\xi_1,\ldots,\xi_k$ form an exposed face in the polyhedron $\conv(\xi_1,\ldots,\xi_N)$, $N > k$, if they are affinely independent (i.e., $\dim\aff(\xi_1,\ldots,\xi_k)=k-1$) and the affine subspace $\aff(\xi_1,\ldots,\xi_k)$ does not intersect the convex hull of the remaining points:
	$$
	\aff(\xi_1,\ldots,\xi_k)\ \cap\ \conv(\xi_{k+1},\ldots,\xi_N) = \emptyset
	$$
\end{definition}

Obviously, in the case $k=1$ we call the face a vertex, and in the case $k=2$ -- an edge.

\medskip

Let us now formulate a sufficient condition for the existence of chattering extremals in the sub-Finsler problem. Let a polyhedral sub-Finsler norm $\|\cdot\|_1$ be given on the manifold $M$ in the sense of the direct definition~\ref{defn: polyhedral1}: $B_1(x)=\conv(f_i(x))$, where $f_i$ are smooth vector fields. Let $x_0\in M$. Select some exposed edge in $B_1(x_0)$, for example, $\conv(f_1(x_0),f_2(x_0))$.

\begin{definition}
	Let $x_0\in M$. We will say that $p_0\in T^*_{x_0}M$ is a \textit{Fuller} covector with respect to the exposed edge $\conv(f_1(x_0),f_2(x_0))$, if
	\begin{enumerate}
		\item the covector $p_0$ strictly separates the affine line of the edge from the convex hull of the remaining vertices
		$$
		\sup_{\xi\in\conv_{i\ge 3}f_i(x_0)} \langle p_0,\xi\rangle
		<
		\inf_{\xi\in\aff(f_1(x_0),f_2(x_0))} \langle p_0,\xi\rangle;
		$$
		
		\item the 7 vector fields
		$$
		f_2-f_1;\ [f_1,f_2];\ [f_1,[f_1,f_2]];\ [f_2,[f_1,f_2]];
		$$
		$$
		[f_1,[f_1,[f_1,f_2]]];\ [f_2,[f_1,[f_1,f_2]]]=[f_1,[f_2,[f_1,f_2]]];\ [f_2,[f_2,[f_1,f_2]]]
		$$
		are linearly independent at the point $x_0$;
		
		\item the covector $p_0$ is orthogonal to the specified 7 vector fields at the point $x_0$.
	\end{enumerate}
\end{definition}

\begin{theorem}
	\label{thm: chattering ubiquity}
	Let $x_0\in M$ and $\conv(f_1(x_0),f_2(x_0))$ be an exposed edge in $B_1(x_0)$. Denote $f=f_1+f_2$, $g=f_2-f_1$ and set
	$$
	\begin{array}{rr}
		\alpha   = [f,[f,[f,[f,g]]]](x_0);&
		\beta    = [g,[f,[f,[f,g]]]](x_0);\\
		\gamma   = [f,[g,[f,[f,g]]]](x_0);&
		\delta   = [g,[g,[f,[f,g]]]](x_0);\\
		\epsilon = [f,[g,[g,[f,g]]]](x_0);&
		\zeta    = [g,[g,[g,[f,g]]]](x_0).
	\end{array}
	$$
	If there exists such a Fuller covector $\hat p\in T^*_{x_0}M$ that
	$$
	\langle \hat p,\beta\rangle<0;\
	\langle \hat p,\alpha\rangle=\langle \hat p,\gamma\rangle = \langle \hat p,\delta\rangle = \langle \hat p,\epsilon\rangle = \langle \hat p,\zeta\rangle = 0;
	$$
	then for every Fuller covector $p_0\in T^*_{x_0}M$ from some neighborhood of $\hat p$, a one-parameter family of chattering extremals\footnote{Recall that an extremal $H=\sum_i u_i\langle p,f_i(x)\rangle$ exhibits the chattering phenomenon at some point if there is a countable number of control switches on the extremal in a neighborhood of this point.} with controls $(1,0,\ldots,0)$ and $(0,1,0,\ldots,0)$ enters the point $(x_0,p_0)\in T^*M$, and a one-parameter family of chattering extremals with the same controls exits.
\end{theorem}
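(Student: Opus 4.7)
The plan is to reduce the problem locally to the analysis of a second-order singular extremal in a two-control time-optimal problem and then invoke the parametric chattering theorem of Zelikin and Borisov from~\cite{ZB}.

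First I would localize. Since $\hat p$ strictly separates the affine hull of the exposed edge from $\conv(f_3(x_0),\ldots,f_N(x_0))$, the separation persists on a neighborhood $U$ of $(x_0,\hat p)$ in $T^*M$, so on $U$ the PMP maximum of $H=\sum_i u_i\langle p,f_i(x)\rangle$ is attained only at $u_3=\cdots=u_N=0$, $u_1+u_2=1$. Substituting $u=u_2-u_1\in[-1,1]$, $f=f_1+f_2$, $g=f_2-f_1$, the extremal system becomes $\dot x=\tfrac12(f(x)+u\,g(x))$, with switching function $\varphi(x,p)=\langle p,g(x)\rangle$ and drift Hamiltonian $H_f=\tfrac12\langle p,f(x)\rangle$. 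Iterated Poisson brackets along this flow show that, modulo the Goh-type identities $\langle p,[g,[f,g]]\rangle=\langle p,[g,[f,[f,g]]]\rangle=0$, the derivatives $\varphi,\dot\varphi,\ddot\varphi,\dddot\varphi$ are respectively proportional to $\langle p,g\rangle$, $\langle p,[f,g]\rangle$, $\langle p,[f,[f,g]]\rangle$, $\langle p,[f,[f,[f,g]]]\rangle$, and the control $u$ first enters at the fourth derivative with coefficient proportional to $\langle p,\beta\rangle$.

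The seven brackets listed in the Fuller-covector definition form a basis of all iterated brackets of $f$ and $g$ of length $\le 4$ containing at least one $g$. Their annihilation by $\hat p$ makes $\varphi,\dot\varphi,\ddot\varphi,\dddot\varphi$ vanish at $(x_0,\hat p)$ together with the Goh identities, and their linear independence at $x_0$ makes the system $\varphi=\dot\varphi=\ddot\varphi=\dddot\varphi=0$ cut out a smooth codimension-$4$ submanifold $S\subset T^*M$ through $(x_0,\hat p)$. The strict inequality $\langle\hat p,\beta\rangle<0$ is the Kelley (generalized Legendre--Clebsch) condition for a minimizing second-order singular extremal. The five vanishing conditions $\langle\hat p,\alpha\rangle=\langle\hat p,\gamma\rangle=\langle\hat p,\delta\rangle=\langle\hat p,\epsilon\rangle=\langle\hat p,\zeta\rangle=0$ eliminate the resonant fifth-order coefficients in the Taylor expansion of the extremal flow at $(x_0,\hat p)$ that would break the Fuller scaling symmetry $x\mapsto\lambda^2 x$, $y\mapsto\lambda y$, $t\mapsto\lambda t$. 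After a symplectic change of variables adapted to $(\varphi,\dot\varphi,\ddot\varphi,\dddot\varphi)$ combined with a Darboux normalization along $S$, the Hamiltonian system matches the Fuller Hamiltonian flow~\eqref{eq: PMP for Fuller Problem} in four canonical variables, up to a higher-order remainder.

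The parametric chattering theorem of~\cite[Ch.~2, 3]{ZB} then asserts that under strict Kelley plus vanishing of these resonant fifth-order coefficients, the Fuller self-similar chattering solution persists as a structurally stable object; pulling it back via the normal-form coordinate change produces, at every Fuller covector $p_0$ near $\hat p$, a one-parameter family of chattering extremals entering and another family exiting $(x_0,p_0)$, with controls alternating between $(1,0,0,\ldots,0)$ and $(0,1,0,\ldots,0)$ as in the Fuller problem. The main obstacle is precisely this persistence step: verifying that the higher-order symplectic remainder of the normal form cannot destroy the Fuller self-similarity. This is where the five vanishing pairings $\alpha,\gamma,\delta,\epsilon,\zeta$ become decisive, eliminating exactly the monomials that would otherwise generate resonant obstructions to the scaling symmetry of the Fuller chattering flow.
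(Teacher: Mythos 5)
Your proposal takes the same high-level route as the paper: use the strict separation by $\hat p$ to localize in $T^*M$, reduce the PMP Hamiltonian to the single-input form $\hat H = F + Gu$ with $F=\tfrac12\langle p,f\rangle$, $G=\tfrac12\langle p,g\rangle$, $u=u_2-u_1\in[-1,1]$, translate Lie brackets of $f,g$ into Poisson brackets of $F,G$, and then delegate to a Zelikin--Borisov chattering theorem. Your observation that the seven brackets in the Fuller-covector definition span (together with $g$) the iterated brackets of $f,g$ of length $\le4$ containing at least one $g$ is correct, and the reading of $\langle\hat p,\beta\rangle<0$ as a strict Kelley-type condition for a second-order singular arc is consistent with the ZB framework.

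There are, however, several concrete problems. First, a counting error: the seven brackets in the Fuller-covector definition are assumed linearly independent, so their annihilation imposes \emph{seven} independent constraints on $(x_0,p_0)$ — this is exactly the codimension stated in the Corollary following the theorem. You claim that $\varphi=\dot\varphi=\ddot\varphi=\dddot\varphi=0$ cuts out a codimension-$4$ submanifold; even allowing that you treated the two Goh identities separately, you account for only six of the seven constraints (you omit $\langle p,[g,[g,[f,g]]]\rangle=0$), and the surface you describe is not the one the theorem is about. Second, the decisive step — verifying that $\hat H$ satisfies the hypotheses of the relevant ZB theorem — is replaced in your proposal by an interpretive sketch: a symplectic normal form ``matching the Fuller Hamiltonian flow up to higher-order remainder'' together with the assertion that the vanishing of $\alpha,\gamma,\delta,\epsilon,\zeta$ ``eliminates resonant fifth-order coefficients that would break the Fuller scaling.'' This is not a verification; you yourself name it ``the main obstacle'' and leave it unresolved. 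The paper's proof has no normal-form or persistence argument — it reduces the problem and then directly checks the explicit hypotheses of Theorem 4.1 and Remark 4.2 of \cite{ZB}. Relatedly, you cite ``Ch.~2, 3'' of \cite{ZB}, which treats the Fuller problem itself and not the generality theorem that actually carries the conclusion; this matters because your persistence framing is drawn from that earlier material rather than from the specific codimension-seven statement the paper invokes. To close the gap you would need to state the hypotheses of Theorem 4.1/Remark 4.2 explicitly and show, via the Poisson-bracket-to-Lie-bracket dictionary, that they are precisely the Fuller-covector conditions plus the sign and vanishing conditions on $\alpha,\ldots,\zeta$ — the ``simple direct check'' that the paper refers to.
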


\begin{proof}
	The proof of this theorem is essentially simple and is a reduction to the known Zelikin–Borisov theorem on the generality of the Fuller phenomenon in optimal control problems. Let's perform this reduction. So, sub-Riemannian shortest paths are solutions to the time-optimal problem~\eqref{eq: time minimiztion problem}, and the Pontryagin Maximum Principle has the form~\eqref{eq: H for time minimiztion}.
	
	If the covector $\hat p$ separates the affine line $\aff(f_1(x_0),f_2(x_0))$ from the convex hull of the remaining vertices, then for any $p\in T_xM$ from a neighborhood of $\hat p$, the maximum of the Pontryagin function $H=\sum_i u_i\langle p,f_i(x)\rangle$ is attained either at the vertex $f_1(x)$, or at the vertex $f_2(x)$, or on the entire segment $\conv(f_1(x),f_2(x))$. Therefore, in a neighborhood of $\hat p$, the extremals coincide with the solutions of the system with the simplified Pontryagin function
	$$
	\hat H = u_1\langle p, f_1(x)\rangle + u_2\langle p, f_2(x)\rangle,
	$$
	where $u_1\ge0$, $u_2\ge 0$ and $u_1+u_2=1$. Equivalently
	$$
	\hat H = F + Gu
	$$
	where $F=\frac12\langle p,f\rangle$, $G=\frac12\langle p,g\rangle$ and $u=u_2-u_1\in[-1;1]$.
	
	So, in a neighborhood of the point $\hat p\in T^*_{x_0}M$, the extremals of the original Pontryagin function $H$ and the simplified one $\hat H$ coincide. Let's check that the extremals of $\hat H$ in the neighborhood of $\hat p$ satisfy the result of the theorem. Note also that replacing $\hat p$ with $\lambda\hat p$, $\lambda>0$, changes nothing, so we can assume that $\langle \hat p,\beta\rangle =-8$.
	
	Recall that the Poisson bracket of Hamiltonians homogeneous in momentum is computed as follows: if $h_1$ and $h_2$ are vector fields, then
	$$
	\{\langle p,h_1\rangle,\langle p,h_2\rangle\} = \langle p,[h_1,h_2]\rangle.
	$$
	Therefore, the Poisson brackets of $F$ and $G$ are structured similarly to the commutators of the vector fields $f$ and $g$ (up to division by $2^{r-1}$, where $r$ is the length of the bracket). Therefore, a simple direct check shows that the Pontryagin function $\hat H$ satisfies the conditions of Theorem 4.1 and Remark 4.2 from the book~\cite{ZB}.
\end{proof}

\begin{corollary}
	The set of such points $(x_0,p_0)$ forms a smooth submanifold of codimension 7 in $T^*M$.
\end{corollary}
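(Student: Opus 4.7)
The plan is to realise the stated set as the preimage of a regular value of a single smooth map $\Phi \colon T^*M \to \R^7$ built from the seven vector fields in the definition of a Fuller covector, intersected with an open subset of $T^*M$, and then to invoke the regular value theorem.

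First, I would set
$$
\Phi(x,p) \;=\; \bigl(\langle p,h_1(x)\rangle,\ldots,\langle p,h_7(x)\rangle\bigr),
$$
where $h_1,\ldots,h_7$ denote the seven vector fields listed in item~(2) of the definition of Fuller covector. Reading the theorem carefully, the chattering conclusion is asserted for \emph{every} Fuller covector in a neighbourhood of $\hat p$, and not only for those satisfying the five auxiliary equations $\langle \hat p,\alpha\rangle = \langle\hat p,\gamma\rangle = \langle\hat p,\delta\rangle = \langle\hat p,\epsilon\rangle = \langle\hat p,\zeta\rangle = 0$ used to select $\hat p$ itself. Hence ``such points'' means exactly: pairs $(x_0,p_0)$ at which $\Phi=0$, subject to the open conditions (exposedness of the edge, linear independence of $h_1,\ldots,h_7$ at $x_0$, strict separation by $p_0$, and, if one wishes to keep it, the strict inequality $\langle p,\beta\rangle<0$). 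All of these open conditions define an open subset $\Omega\subset T^*M$, and the set in question is $\Phi^{-1}(0)\cap\Omega$.

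Next, I would verify that $\Phi|_\Omega$ is a submersion onto $\R^7$. In canonical local coordinates $(x,p)$ on $T^*M$, differentiating $\Phi$ with respect to $p$ while keeping $x$ fixed yields the $7\times n$ matrix whose $i$-th row is the coordinate vector $h_i(x)\in\R^n$; its rank equals $\dim\,\mathrm{span}\{h_1(x),\ldots,h_7(x)\}=7$ on $\Omega$ by the linear independence assumption. Thus the momentum block of $D\Phi$ alone is already surjective onto $\R^7$, so $0$ is a regular value of $\Phi|_\Omega$. The regular value theorem then gives that $\Phi^{-1}(0)\cap\Omega$ is a smooth embedded submanifold of $T^*M$ of codimension $7$, equivalently of dimension $n+(n-7)=2n-7$ inside the $2n$-dimensional manifold $T^*M$.

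The only step I would expect to require any real care is the very first---namely, extracting from the theorem the correct list of conditions that carve out ``such points'' (in particular, recognising that the five extra equations on $\alpha,\gamma,\delta,\epsilon,\zeta$ are used only to locate the seed covector $\hat p$, not to define the chattering locus). Once that reading is fixed, the submersion step reduces to the linear independence already built into the definition of a Fuller covector, so no further non-degeneracy hypothesis is needed.
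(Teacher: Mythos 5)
The paper gives no explicit proof of this corollary, and your regular-value argument is the natural one it implicitly relies on. You have correctly identified the main interpretive point: the seven orthogonality equations in the definition of a Fuller covector are what cut out the codimension, while the five extra equations on $\alpha,\gamma,\delta,\epsilon,\zeta$ only serve to locate the seed covector $\hat p$, and every other hypothesis (exposed edge, linear independence, strict separation, $\langle p,\beta\rangle<0$) is an open condition. The submersion step is also right: the $p$-derivative of $\Phi$ at a point of $\Omega$ is the $7\times n$ matrix whose rows are $h_1(x),\ldots,h_7(x)$, which has full rank $7$ by condition~(2) of the Fuller-covector definition, so $0$ is a regular value of $\Phi|_\Omega$ and $\Phi^{-1}(0)\cap\Omega$ is an embedded codimension-$7$ submanifold of $T^*M$. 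One small caveat that does not affect the conclusion: strictly speaking, the set singled out by the theorem is a relatively open subset of $\Phi^{-1}(0)\cap\Omega$ (those Fuller covectors lying in some neighbourhood of an admissible seed $\hat p$), but an open subset of a codimension-$7$ submanifold is again a codimension-$7$ submanifold, so your count stands.
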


\begin{remark}
	Note that since the covector $\hat p$ strictly separates the edge $\conv(f_1(x_0),f_2(x_0))$ from the convex hull of the remaining vertices, then $\sup_u H(x_0,\hat p,u)>0$, and, consequently, all the chattering extremals found in Theorem~\ref{thm: chattering ubiquity} are normal.
\end{remark}

\section{Conclusion}
\label{sec: chaos}

The method presented in this article can also be used to obtain Finsler and sub-Finsler shortest paths with another known phenomenon in optimal control, which arises, for example, in the following problem (see~\cite{ZLH}):
$$
\begin{array}{c}
	\int_0^\infty (x_1^2 + x_2^2)\,dt\to\min;\\
	\ddot x_1=u_1,\ \ddot x_2=u_2;\quad(u_1,u_2)\in U;\\
	x_1(0)=x_{10},\ x_2(0)=x_{20},\ \dot x_1(0)=y_{10},\ \dot x_2(0)=y_{20};
\end{array}
$$
where the set of admissible controls $U$ is an equilateral triangle centered at the origin. In this problem, all optimal trajectories (except the identically zero one) reach the origin with chattering. However, the order of control alternation on different optimal trajectories can be completely different. Indeed, in the classical Fuller problem, the set of controls is a segment, and chattering involves simple alternation between the segment's endpoints. In the problem mentioned above, the set of controls $U$ is a triangle, and the order of vertex alternation during chattering on different optimal trajectories can be completely different. In this problem, the order of vertex alternation between control switches is determined by a topological Markov chain on some non-trivial graph. Finsler and sub-Finsler manifolds can also contain similar shortest paths. The simplest examples are constructed analogously to the examples in sections~\ref{sec: subf example} and~\ref{sec: finsl example} on the space $\R^6$ (it is necessary to add two coordinates to the original phase space $\R^4$ — the value of the functional and the travel time).


\begin{thebibliography}{99}

\bibitem{LokutsievskiyCT2} A.A. Ardentov, L.V. Lokutsievskiy, Yu.L. Sachkov, "Extremals for a series of sub-Finsler problems with 2-dimensional control via convex trigonometry", ESAIM: COCV, 27 (2021), 32 , 52 pp., arXiv: 2004.10194 

\bibitem{ArdentovSachkov} A. Ardentov, Yu. Sachkov "Sub-Finsler Geodesics on the Cartan Group", Journal of Dynamical and Control Systems, Vol. 25, 2019.

\bibitem{Bao} D. Bao, S.-S. Chern, Z. Shen,  "An Introduction to Riemann-Finsler Geometry", Springer, 2000.

\bibitem{BBLDS} D. Barilari, U. Boscain, E. Le Donne, and M. Sigalotti, "Sub-Finsler structures from the time-optimal control viewpoint for some nilpotent distributions", J. Dyn. Control Syst. 23 (2017), no. 3, 547–57

\bibitem{Fuller} A.T. Fuller "Relay Control Systems Optimized for Various Performance Criteria", Automatic and Remote Control, Proceedings of the First IFAC Congress, Moscow, 1960.

\bibitem{LeDonne} E. Le Donne, "Metric Lie Groups, Carnot-Carathéodory spaces from the homogeneous viewpoint", Springer Nature, 2025

\bibitem{LokutsievskiyCT}  L.V. Lokutsievskiy, "Convex trigonometry with applications to sub-Finsler geometry", Sb. Math., 210:8 (2019), 1179–1205, arXiv: 1807.08155 

\bibitem{Pansu} Pansu. "Croissance des boules et des g\'eod\'esiques ferm\'ees dans les nilvari\'et\'es", Ergodic Theory Dynam. Systems, 3(3):415–445, 1983.

\bibitem{ZB}  M. Zelikin , V. Borisov, "Theory of Chattering Control with applications to Astronautics, Robotics, Economics, and Engineering", 1994

\bibitem{ZLH} M. I. Zelikin, L. V. Lokutsievskii, R. Hildebrand, “Typicality of Chaotic Fractal Behavior of Integral Vortices in Hamiltonian Systems with Discontinuous Right Hand Side”, Journal of Mathematical Sciences, 221:1 (2017), 1–136

\bibitem{ZMH} 	M. I. Zelikin, N. B. Melnikov, R. Hildebrand, “The Topological Structure of the Phase Portrait of a Typical Fiber of Optimal Synthesis for Chattering Problems”, Differential equations. Certain mathematical problems of optimal control, Collected papers, Trudy Mat. Inst. Steklova, 233, Nauka, MAIK «Nauka/Inteperiodika», M., 2001, 125–152; Proc. Steklov Inst. Math., 233 (2001), 116–142

\end{thebibliography}
\end{document}